\NeedsTeXFormat{LaTeX2e}
 
\documentclass[a4paper,11pt]{amsart}

\usepackage{amsmath, amssymb, amsthm}

\newcommand{\sP}{\mathsf{P}}
\newcommand{\bP}{\mathbb{P}}
\newcommand{\bN}{\mathbb{N}}

\newcommand{\cO}{\mathcal{O}}
\newcommand{\cS}{\mathcal{S}}
\newcommand{\sA}{\mathsf{A}}
\newcommand{\sH}{\mathsf{H}}
\newcommand{\sF}{\mathsf{F}}
\newcommand{\sJ}{\mathsf{J}}
\newcommand{\diam}{\operatorname{diam}}

\newcommand{\PGL}{\mathrm{PGL}}
\newcommand{\supp}{\operatorname{supp}}
\newcommand{\bR}{\mathbb{R}}
\newcommand{\Res}{\operatorname{Res}}

\newcommand{\Capa}{\operatorname{Cap}}
\newcommand{\cM}{\mathcal{M}}
\newcommand{\cF}{\mathcal{F}}

\newcommand{\sB}{\mathsf{B}}

\numberwithin{equation}{section}
\theoremstyle{plain}
\newtheorem{theorem}{Theorem}[section]

\newtheorem{mainth}{Theorem}

\theoremstyle{definition}
\newtheorem{definition}[theorem]{Definition}

\newtheorem*{acknowledgement}{Acknowledgement}
\theoremstyle{remark}
\newtheorem{remark}[theorem]{Remark}

\begin{document}
\title[Uniform perfectness of the Berkovich Julia sets]{
Uniform perfectness of the Berkovich Julia sets
in non-archimedean dynamics}
\author[Y\^usuke Okuyama]{Y\^usuke Okuyama
}
\address{
Division of Mathematics,
Kyoto Institute of Technology, Sakyo-ku,
Kyoto 606-8585 Japan}
\email{okuyama@kit.ac.jp}

\date{\today}

\subjclass[2010]{Primary 37P50; Secondary 11S82, 31C15}
\keywords{uniformly perfect set, Berkovich Julia set, potentially good reduction,
lower capacity density condition, generalized Green function, 
uniform H\"older continuity, non-archimedean dynamics, potential theory}

\begin{abstract}
 We show that a rational function $f$ of degree $>1$ 
 on the projective line over an algebraically closed field that is complete 
 with respect to a non-trivial and non-archimedean absolute value
 has no potentially good reductions if and only if
 the Berkovich Julia set of $f$ is uniformly perfect. As an application,
 a uniform regularity of the boundary of each Berkovich Fatou component of $f$
 is also established.
\end{abstract}

\maketitle

\section{Introduction}\label{sec:intro}

Let $K$ (or $(K,|\cdot|)$) be an algebraically closed field 
(of arbitrary characteristic) that is complete
with respect to a non-trivial and non-archimedean absolute value $|\cdot|$.
A subset $B$ in $K$ is called a $K$-closed disk if 
\begin{gather*}
 B=B(a,r):=\{z\in K:|z-a|\le r\}
\end{gather*}
for some $a\in K$ and some $r\ge 0$; by the strong triangle inequality
$|z+w|\le\max\{|z|,|w|\}$ for any $z,w\in K$,
the diameter $\diam(B(a,r))$ of $B(a,r)$ in $(K,|\cdot|)$
equals $r$, and we have $B(a,r)=B(b,r)$ for any $b\in B(a,r)$.
When $\diam B>0$, 
a $K$-closed disk $B$ is in fact clopen as a subset in $(K,|\cdot|)$.

By the strong triangle inequality again,
for any two $K$-closed disks $B,B'$, if $B\cap B'\neq\emptyset$,
then either $B\subset B'$ or $B\supset B'$. 

\subsection{Berkovich projective line}
The Berkovich projective line $\sP^1=\sP^1(K)$ is 
the Berkovich analytification (a compact augmentation) 
of the (classical) projective line 
$\bP^1=\bP^1(K)=K\cup\{\infty\}$ (see the foundational Berkovich \cite{Berkovichbook}).
As a set, $\sP^1$ is almost identified with the set of all $K$-closed disks; 
each singleton $\{a\}=B(a,0)$ in $K=\bP^1\setminus\{\infty\}$ is identified with the type I point $a\in K\subset\sP^1$, and each $K$-closed disk $B$ satisfying $\diam B\in|K^*|$ (resp.\ $\diam B\in\bR_{>0}\setminus |K^*|$) is identified with a type II (resp.\ type III) point $\cS$ in $\sP^1$ (then we also write $B=B_{\cS}$ and $\cS=\cS_B$). The remaining point $\infty\in\bP^1\subset\sP^1$ is also a type I point in $\sP^1$, and the set of all other remaining points in $\sP^1$
is exactly the set $\sH^1_{\mathrm{IV}}$ of all type IV points in 
$\sP^1$.
The Berkovich upper half space
is
\begin{gather*}
 \sH^1=\sH^1(K):=\sP^1\setminus\bP^1=\sH^1_{\mathrm{II}}\cup\sH^1_{\mathrm{III}}\cup\sH^1_{\mathrm{IV}}\quad(\text{a disjoint union}),
\end{gather*}
where $\sH^1_{\mathrm{II}}$ (resp.\ $\sH^1_{\mathrm{III}}$) is
the set of all type II (resp.\ type III) points in $\sP^1$.
By the above alternative for two intersecting $K$-closed disks, the inclusion relation among $K$-closed disks extends to a (partial) ordering $\prec$ on $\sP^1$ 
so that for any $\cS,\cS'\in\sP^1\setminus(\{\infty\}\cup\sH^1_{\mathrm{IV}})$, $\cS\prec\cS'$ if and only if $B_{\cS}\subsetneq B_{\cS'}$, 
that $\infty$ is the unique maximum element in $(\sP^1,\prec)$, 
that $K\cup\sH^1_{\mathrm{IV}}$ coincides with
the set of all minimal elements in $(\sP^1,\prec)$,
and that for any $\cS,\cS'\in\sP^1$, 
there is the minimum element $\cS''$ in $(\sP^1,\prec)$ 
satisfying both $\cS\preceq\cS'$ and $\cS'\preceq\cS''$, 
which we denote by $\cS\wedge\cS'$.
For every $\cS,\cS'\in\sP^1$, the (closed) interval $[\cS,\cS']$ is defined as
$[\cS,\cS']=\{\cS''\in\sP^1:\cS\preceq\cS''\preceq\cS'\}$ if $\cS\preceq\cS'$,
and is defined as 
\begin{gather*}
 [\cS,\cS']=[\cS,\cS\wedge\cS']\cup[\cS\wedge\cS',\cS'] 
\end{gather*}
in general. For every $\cS\in\sP^1$, the directions space $T_{\cS}\sP^1$ of $\sP^1$ at $\cS$
is the set of all germs $\overrightarrow{\cS\cS'}$
of (left half open) intervals $(\cS,\cS']=[\cS,\cS']\setminus\{\cS\}$, $\cS'\in\sP^1\setminus\{\cS\}$. Setting
\begin{gather*}
 U_{\overrightarrow{v}}=U_{\cS,\overrightarrow{v}}:=\bigl\{\cS'\in\sP^1\setminus\{\cS\};\overrightarrow{\cS\cS'}=\overrightarrow{v}\bigr\}\quad\text{for each }\overrightarrow{v}\in T_{\cS}\sP^1,
\end{gather*}
the family $\{U_{\cS,\overrightarrow{v}}:\cS\in\sH^1_{\mathrm{II}},\overrightarrow{v}\in T_{\cS}\sP^1\}$ is a quasi open basis of the (so called weak) topology of $\sP^1$; $\sP^1$ is indeed a compact, connected, uniquely arcwise connected, and Hausdorff topological space.
For every $\cS\in\sP^1$ and every $\overrightarrow{v}\in T_{\cS}\sP^1$,
$U_{\overrightarrow{v}}$ is a component of $\sP^1\setminus\{\cS\}$
(and $\partial(U_{\overrightarrow{v}})=\{\cS\}$), and
any component of $\sP^1\setminus\{\cS\}$ is written as
$U_{\overrightarrow{v}}$ by a unique $\overrightarrow{v}\in T_{\cS}\sP^1$.
Both $\bP^1$ and $\sH^1_{\mathrm{II}}$ are (non-empty and) dense in $\sP^1$, 
and so are $\sH^1_{\mathrm{III}}$ and $\sH^1_{\mathrm{IV}}$
if they are non-empty. 

The (affine) diameter function $\diam:\cS\mapsto\diam(B_{\cS})$ on $\sP^1\setminus(\{\infty\}\cup\sH^1_{\mathrm{IV}})$ 
extends to an upper semicontinuous function $\sP^1\to[0,+\infty]$
so that $\diam(\infty)=+\infty$, that $\sH^1=\{\cS\in\sP^1:\diam\cS\in(0,+\infty)\}$, and that the restriction $\diam|[\cS,\cS']$ is continuous for any $\cS,\cS'\in\sP^1$. For references and more details, see Section \ref{sec:background}.

\subsection{Uniformly perfect subsets in $\sP^1$}
The Berkovich affine line is
\begin{gather*}
 \sA^1=\sA^1(K)=\sP^1\setminus\{\infty\}=U_{\overrightarrow{\infty 0}}. 
\end{gather*}
Let us call an open subset $A$ in $\sP^1$ a (non-degenerating and finite
open Berkovich) concentric annulus in $\sA^1$
if $A=A(\cS,\cS'):=U_{\overrightarrow{\cS\cS'}}\cap U_{\overrightarrow{\cS'\cS}}$ 
for some $\cS,\cS'\in\sH^1$ satisfying $\cS\prec\cS'$, and
the modulus of $A$ is defined by
\begin{gather*}
 \operatorname{mod}A:=\log\bigl(\diam(\cS')/\diam\cS\bigr)\in\bR_{>0}.
\end{gather*}
For a subset $E$ in $\sP^1$, we say a concentric annulus 
$A=A(\cS,\cS')$ in $\sA^1$ separates $E$ if 
\begin{gather*}
A\subset\sP^1\setminus E,\quad
E\cap(\sP^1\setminus U_{\overrightarrow{\cS\cS'}})\neq\emptyset,
\quad\text{and}\quad
E\cap(\sP^1\setminus U_{\overrightarrow{\cS'\cS}})\neq\emptyset,
\end{gather*}
and say $E$ has the bounded moduli property for its separating annuli if
\begin{gather}
 \sup\bigl\{\operatorname{mod}A:A\text{ is a concentric annulus in }\sA^1
 \text{ separating }E\bigr\}<+\infty,\label{eq:boundedmoduli}
\end{gather}
adopting the convention $\sup\emptyset=0$. 

See \eqref{eq:upfree} for a coordinate-free formulation of \eqref{eq:boundedmoduli}. In non-archimedean setting, the condition \eqref{eq:boundedmoduli} for 
a non-empty compact subset $E$ in $\sP^1$
itself does not imply the perfectness of $E$ (since $\diam\cS>0$ for $\cS\in\sH^1$).
The following stronger notion than that of a perfect subset in $\sP^1$
is a non-archimedean counterpart to that which has been well studied
in complex analysis/dynamics (see, e.g, the survey \cite{sugawa03}).

\begin{definition}\label{th:up}
A subset $E$ in $\sP^1$ is uniformly perfect 
if $E$ not only is perfect, i.e., is non-empty and compact and
has no isolated points in $\sP^1$, but also
has the bounded moduli property for its separating annuli.
\end{definition}

In Section \ref{sec:lcd}, we see that the bounded moduli property of a 
compact subset $E$ in $\sP^1$ for its separating annuli is equivalent 
to a potential theoretic (non-infinitesimal) lower capacity density property of $E$
(see Theorem \ref{th:lcd} in Section \ref{sec:lcd}), and then
in Section \ref{sec:holder}, we see that those equivalent properties imply 
a H\"older continuity (uniformly at any point of $(\partial D)\cap\bP^1$)
of the logarithmic potential $p_{\cS_0,\nu_{\cS_0,E}}$ on $\sP^1$
of the (unique) equilibrium mass distribution $\nu_{\cS_0,E}$ on $E$ 
with respect to any point $\cS_0$ in a component $D$ of $\sP^1\setminus E$ 
or equivalently, that of the generalized Green function $G_{\cS_0,E}$ 
on $\sP^1$ associated to $E$ with respect to $\cS_0\in D$ (see Theorem \ref{th:holder} in Section \ref{sec:holder}).

\subsection{Non-archimedean dynamics and the principal results}
The analytic action on $\bP^1$ of a rational function $h\in K(z)$
extends analytically (so continuously) to that on $\sP^1$.
If in addition $\deg h>0$, then this extended analytic action on $\sP^1$ of $h$ 
preserves the types I, II, III, or IV of points in $\sP^1$ and is surjective,
open, and fiber-discrete.

For a rational function $f\in K(z)$ of degree $d>1$, the Berkovich Julia set
$\sJ(f)$ is the set of all points $\cS\in\sP^1$ such that 
\begin{gather*}
\bigcap_{U:\text{ open in }\sP^1\text{ containing }\cS}\biggl(\bigcup_{n\in\bN}f^n(U)\biggr)=\sP^1\setminus E(f),
\end{gather*}
where the classical exceptional set
$E(f):=\{a\in\bP^1:\#\bigcup_{n\in\bN}f^{-n}(a)<\infty\}$ of $f$ is at most countable (and $\#E(f)\le 2$ if in addition $K$ has characteristic $0$),
and the Berkovich Fatou set $\sF(f)$ is defined by $\sP^1\setminus\sJ(f)$,
so that $\sJ(f)$ and $\sF(f)$ are respectively compact and open in $\sP^1$. 
In fact, $\sJ(f)$ is a non-empty compact 
(and nowhere dense) subset in $\sP^1$, and 
both $\sJ(f)$ and $\sF(f)$ are totally invariant under $f$ in that
\begin{gather*}
 f^{-1}(\sJ(f))=\sJ(f) \quad\text{and}\quad f^{-1}(\sF(f))=\sF(f).
\end{gather*}
Identifying $\PGL(2,K)$ with
the linear fractional transformations group on $\bP^1$,
both $\sJ(f)$ and $\sF(f)$ are
also $\PGL(2,K)$-equivariant in that for every $h\in\PGL(2,K)$,
\begin{gather*}
 \sJ(h\circ f\circ h^{-1})=h(\sJ(f))\quad\text{and}\quad\sF(h\circ f\circ h^{-1})=h(\sF(f)).
\end{gather*}
For references, see Section \ref{sec:background}.

We say $f$ has a potentially good reduction if
$f^{-1}(\cS)=\{\cS\}$ for some $\cS\in\sH^1$
(and then $\sJ(f)=\{\cS\}$),
and otherwise, we say $f$ has no potentially good reductions. Indeed,
$\sJ(f)$ has no isolated points (so $\sJ(f)$ is perfect)
if and only if $f$ has no potentially good reductions. 

Our first principal result is the following non-archimedean counterpart to
Eremenko \cite{EremenkoUP}, Hinkkanen \cite{Hinkkanen93}, 
and Ma\~n\'e--da Rocha \cite{MR92} in complex dynamics. 

\begin{mainth}\label{th:upJulia}
Let $K$ be an algebraically closed field that is complete
with respect to a non-trivial and non-archimedean absolute value. Then for every rational function $f\in K(z)$ of degree $>1$,  
$f$ has no potentially good reductions if and only if
the Berkovich Julia set $\sJ(f)$ is uniformly perfect.
\end{mainth}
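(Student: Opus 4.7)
The backward direction is immediate: a uniformly perfect set is by definition perfect, and the introduction already records that $\sJ(f)$ is perfect if and only if $f$ has no potentially good reductions.

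For the forward direction, assume $f$ has no potentially good reductions; then $\sJ(f)$ is perfect, and it remains to verify the bounded moduli property \eqref{eq:boundedmoduli} for separating annuli of $\sJ(f)$. I would argue by contradiction: suppose there exist concentric annuli $A_n=A(\cS_n,\cS'_n)\subset\sA^1$ (with $\cS_n\prec\cS'_n$), each separating $\sJ(f)$, with $m_n:=\operatorname{mod}A_n\to\infty$. By density of $\sH^1_{\mathrm{II}}$ I may take $\cS_n,\cS'_n\in\sH^1_{\mathrm{II}}$, and the separating property forces each $A_n\subset\sF(f)$.

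The key tool is Rivera-Letelier's local description of $f$ on concentric annuli: on any such $A$, either (a) $f(A)$ is again a concentric annulus with $\operatorname{mod}(f(A))=\deg(f|A)\cdot\operatorname{mod}(A)\ge\operatorname{mod}(A)$, or (b) $f(A)=\sP^1\setminus F$ for some finite set $F$, i.e., the annulus ``wraps around''. Using the defining property of $\sJ(f)$---that, for any open $V\subset\sP^1$ meeting $\sJ(f)$, $\bigcup_{k\ge0}f^k(V)\supset\sP^1\setminus E(f)$---I iterate $A_n$ forward until case (b) must occur at some step $k_n$. Until then, $f^k(A_n)$ is a concentric annulus of modulus at least $m_n$ that still separates $\sJ(f)$ (by total invariance). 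At the pinch step, Rivera-Letelier's directional action at the relevant type~II point identifies two opposite directions of the annulus, forcing the endpoints of the pre-pinch annulus into a bounded sub-configuration of $\sP^1$. This is incompatible, for large $n$, with $\sJ(f)$ being a fixed compact set admitting in that sub-configuration a separating annulus whose modulus tends to infinity.

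The main obstacle I anticipate is executing the pinch step precisely: reading off, when $f$ ``wraps'' an annulus, how the two boundary type~II points and the separating role with respect to $\sJ(f)$ degenerate in a controlled way, so as to extract a uniform upper bound on the pre-pinch modulus. A natural safety net, should the direct dynamical approach meet complications, is to appeal to Theorem~\ref{th:lcd} and reduce the bounded moduli property to the lower capacity density condition for $\sJ(f)$, and then verify the latter via standard forward-iteration mass estimates using the balance equation $f^*\mu_f=d\cdot\mu_f$ for the canonical measure $\mu_f$, whose support equals $\sJ(f)$ under the no-potentially-good-reductions hypothesis.
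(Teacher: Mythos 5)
Your backward direction is fine, and so is the reduction of the forward direction to the bounded moduli property, but the core of your forward argument has a genuine gap --- and it is exactly the step you flag as "the main obstacle": the pinch step is not a technicality to be filled in later, it is the entire content of the proof, and the iteration scheme leading up to it is already problematic. Concretely: (1) the assertion that $f^k(A_n)$ remains a concentric annulus of modulus at least $m_n$ that \emph{still separates} $\sJ(f)$ does not follow from total invariance. Separation requires both complementary components of $f^k(A_n)$ to meet $\sJ(f)$, and the image under $f$ of a complementary component of $A_n$ need not sit inside a complementary component of $f(A_n)$ (by \eqref{eq:image} it can be all of $\sP^1$); moreover if $f$ folds the skeleton $[\cS_n,\cS'_n]$ the image need not be an annulus at all, so case (a) versus case (b) is not an exhaustive dichotomy as you state it. (2) The claim that the wrapping case must eventually occur is not justified by the defining property of $\sJ(f)$: since $A_n\subset\sF(f)$, the annulus itself does not meet $\sJ(f)$, and what the expansion property blows up is a complementary disk of $A_n$, which does not force $A_n$ to wrap. (3) Even granting a wrap at time $k_n$, you give no mechanism converting it into a \emph{uniform} bound on $\operatorname{mod}A_n$. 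Your safety net is likewise only a sketch: deriving the lower capacity density condition \eqref{eq:lcd} for $\sJ(f)$ from the balance equation for the canonical measure would require a quantitative local mass/energy estimate that is essentially as hard as the theorem itself; in the paper the capacity density of $\sJ(f)$ is a \emph{consequence} of uniform perfectness via Theorem~\ref{th:lcd}, not an independent input.

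For comparison, the paper's proof avoids iterating annuli altogether and adapts Ma\~n\'e--da Rocha: after normalizing $A_j=A(\cS_{B(a_j,|z_j|)},\cS_{B(a_j,|z'_j|)})$ with the outer point dominated by $\cS_g$, it inserts intermediate radii $|z'_jw_j|<|z'_jw'_j|$ with $|w_j/w'_j|\to 0$, chooses a three-point set $S$ (inside $\sJ(f)\cap\bP^1$, or among the centers $a_j$ when these have no limit point), and defines a stopping time $m_j$ at which the chordal diameter of $f^{m_j}(U_{\overrightarrow{\cS_{B(a_j,|z'_jw_j|)}a_j}})\cap\bP^1$ first reaches a fixed $c>0$. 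One then checks that $f^{m_j}$ of the slightly larger disk omits at least two points of $S$, so Hsia's Montel-type two points theorem applies to the rescaled family $g_j(z)=f^{m_j}(a_j+z'_jw'_jz)$ on $\cM_K$, and the resulting equicontinuity at $0$ contradicts the diameter lower bound $c$. If you want to pursue your route, you would need to supply precisely the control at the pinch step that this stopping-time/normal-family argument is designed to replace.
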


In particular, if $f$ has no potentially good reductions,
then $\sJ(f)$ has the (non-infinitesimal) lower capacity density property,
as mentioned above. Theorem \ref{th:upJulia} is shown in Section \ref{sec:upJulia}.

A component of $\sF(f)$ is called a Berkovich Fatou component of $f$. 
Our second principal result
is the following uniform regularity of the boundary of 
any Berkovich Fatou component of $f$.


\begin{mainth}\label{th:hrFatou}
Let $K$ be an algebraically closed field that is complete
with respect to a non-trivial and non-archimedean absolute value,
and let $f\in K(z)$ be a rational function on $\bP^1$ of degree $>1$.  
Then for every Berkovich Fatou
component $D$ of $f$ and every $\cS_0\in D$, 
there are $\alpha>0$ and $\delta_0\in(0,1]$ such that 
\begin{gather*}
 \sup_{a\in(\partial(D_{\cS_0}))\cap\bP^1}\sup_{\delta\in(0,\delta_0]}
\frac{\sup_{\sB_\#(a,\delta)}
\bigl(p_{\cS_0,\nu_{\cS_0,\sP^1\setminus D}}(\cdot)-\log\Capa_{\cS_0}(\sP^1\setminus D)\bigr)}{\delta^\alpha}<+\infty.
\end{gather*}
Here $\Capa_{\cS_0}E$ is the logarithmic capacity of a subset
$E$ in $\sP^1$ with respect to $\cS_0\in\sP^1\setminus E$ $($see \eqref{eq:capacitygeneral}$)$, and $\sB_\#(a,\delta)$
is the Berkovich chordal closed ball in $\sP^1$
centered at $a\in\bP^1$ and of diameter $\delta\in(0,1]$ $($see $\S\ref{th:kernel})$.
\end{mainth}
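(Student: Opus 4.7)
The plan is to reduce Theorem \ref{th:hrFatou} to the H\"older continuity result stated as Theorem \ref{th:holder} applied to the compact set $E:=\sP^1\setminus D$. Since $D$ is a component of $\sP^1\setminus E$ and any $\cS_0\in D$ is admissible in Theorem \ref{th:holder}, the only substantive step is to verify that $E$ is uniformly perfect; this in turn will be deduced from Theorem \ref{th:upJulia}.

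First I would dispose of the case in which $f$ has potentially good reduction. Then $\sJ(f)=\{\cS_*\}$ for some $\cS_*\in\sH^1$, every Berkovich Fatou component $D$ of $f$ is a connected component of $\sP^1\setminus\{\cS_*\}$, and $\partial D=\{\cS_*\}\subset\sH^1$. Hence $(\partial D)\cap\bP^1=\emptyset$, so the outer supremum in the conclusion is vacuous and the asserted bound is trivial.

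In the remaining case $f$ has no potentially good reductions, so by Theorem \ref{th:upJulia} the Berkovich Julia set $\sJ(f)$ is uniformly perfect, and I claim that $E=\sP^1\setminus D$ is then itself uniformly perfect. Perfectness is immediate: any point of $E$ either lies in $\sJ(f)$, which has no isolated points, or lies in some open Berkovich Fatou component $D'\neq D$ contained in $E$, and in either case is not isolated in $E$. For the bounded moduli property, let $A=A(\cS,\cS')$ be a concentric annulus separating $E$, so necessarily $A\subset D$. The complement $\sP^1\setminus A$ decomposes into the disjoint union of the two connected closed sets $W_1:=\sP^1\setminus U_{\overrightarrow{\cS\cS'}}$ and $W_2:=\sP^1\setminus U_{\overrightarrow{\cS'\cS}}$, with $\cS\in W_1$ and $\cS'\in W_2$. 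It suffices to show that $\sJ(f)$ meets each $W_i$, for then $A$ separates $\sJ(f)$ as well, and the uniform perfectness of $\sJ(f)$ bounds $\operatorname{mod}A$ independently of $A$. Supposing toward a contradiction that $\sJ(f)\cap W_1=\emptyset$, the connected set $W_1\subset\sF(f)$ lies in a single Fatou component $D_1\ni\cS$; since $\cS\in\overline{A}\subset\overline{D}$ and $D_1$ is open, $D_1\cap D\neq\emptyset$, which forces $D_1=D$ as distinct Fatou components are disjoint. But then $W_1\subset D$ contradicts the separation hypothesis $E\cap W_1\neq\emptyset$; the argument for $W_2$ is symmetric.

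With $E=\sP^1\setminus D$ known to be uniformly perfect, Theorem \ref{th:holder} applied to $E$, to the component $D$ of $\sP^1\setminus E$, and to any $\cS_0\in D$ delivers exactly the H\"older estimate in the conclusion of Theorem \ref{th:hrFatou}. The main obstacle is the verification that every concentric annulus separating $\sP^1\setminus D$ also separates $\sJ(f)$; apart from this transfer, the result is essentially a direct corollary of Theorem \ref{th:holder}.
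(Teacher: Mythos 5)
Your proposal is correct and follows exactly the route the paper indicates (the potentially-good-reduction case is vacuous since $\partial D\subset\sH^1$; otherwise combine Theorems \ref{th:upJulia}, \ref{th:lcd}, and \ref{th:holder}). The paper gives only a one-line derivation, and your verification that every concentric annulus separating $\sP^1\setminus D$ also separates $\sJ(f)$ — hence that $\sP^1\setminus D$ is itself uniformly perfect — correctly supplies the step the paper leaves implicit.
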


The assertion in Theorem \ref{th:hrFatou} 
is clear if $f$ has a potentially good reduction, and
follows from Theorem \ref{th:upJulia} together with
Theorems \ref{th:lcd} and \ref{th:holder} below
if $f$ has no potentially good reductions.
Theorem \ref{th:hrFatou} simplifies our former computations 
in \cite[\S 4]{OS15}
of $p_{\infty,\nu_{\infty,\sP^1\setminus D_\infty}}$, letting
$D_\infty$ be the Berkovich Fatou component of $f$ containing $\infty$
under the normalization that $\infty\in\sF(f)$. This simplification
was one of our aims in this paper.
 
\section{Background}\label{sec:background}

For the foundation on non-archimedean dynamics on $\sP^1=\sP^1(K)$, see 
\cite{BR10,BenedettoBook,FJbook,FR06,ThuillierThesis},
and the survey \cite[\S 1-\S 4]{Jonsson15}.
In what follows, we adopt notations from \cite[\S 2, \S 3]{OkuDivisor}.   

\subsection{$\sH^1$ as a hyperbolic space}
We begin with continuing the exposition on $\sP^1$ in Section \ref{sec:intro}.
For every $a\in\bP^1$, $\#T_a\sP^1=1$. On the other hand,
\begin{gather*}
\sH^1_{\mathrm{IV}}=\{\cS\in\sH^1;\#T_\cS\sP^1=1\}\quad\text{and}\quad 
\sH^1_{\mathrm{III}}=\{\cS\in\sP^1;\#T_\cS\sP^1=2\}. 
\end{gather*}
For each $a\in\bP^1(K)$, writing $a=[a_0:a_1]$, where $\max\{|a_0|,|a_1|\}=1$,
the point $\tilde{a}:=[\tilde{a_0}:\tilde{a_1}]\in\bP^1(k)$ is called 
the reduction of $a$ modulo $\cM_K$, where 
$\cO_K=B(0,1)$ is the ring of $K$-integers, 
\begin{gather*}
 \cM_K=\{z\in K:|z|<1\}(=U_{\overrightarrow{\cS_g0}}\cap K) 
\end{gather*}
is the unique maximal ideal of $\cO_K$, and
$k=k_K=\cO_K/\cM_K$ is the residue field of $K$. 
For every $\cS\in\sH^1_{\mathrm{II}}$, there is a bijection between $T_{\cS}\sP^1$ and $\bP^1(k)$, so that $\#T_{\cS}\sP^1=\#\bP^1(k)>2$; for the Gauss
(or canonical) point
\begin{gather*}
 \cS_g:=\cS_{\cO_K}
\end{gather*}
($\diam(\cS_g)=1\in|K^*|$ so $\cS_g\in\sH^1_{\mathrm{II}}$) in $\sP^1$, 
there is a canonical bijection 
$T_{\cS_g}\sP^1\ni\overrightarrow{v}\leftrightarrow\tilde{a}\in\bP^1(k)$, where
$a$ is any point in $U_{\overrightarrow{v}}\cap\bP^1(K)$.

The hyperbolic (or big model) metric $\rho$ on $\sH^1$ is defined as
\begin{align*}
 \rho(\cS,\cS'):=&2\log\diam(\cS\wedge\cS')-\log\diam\cS-\log\diam(\cS')\\
=&\begin{cases}
		  \log\bigl(\diam(\cS')/\diam\cS\bigr)\quad(\cS,\cS'\in\sH^1,\cS\preceq\cS'),\\
		  \rho(\cS,\cS\wedge\cS')+\rho(\cS\wedge\cS',\cS)\quad(\cS,\cS'\in\sH^1,\text{ in general}).
		 \end{cases}
\end{align*}
The Berkovich hyperbolic space $(\sH^1,\rho)$ is an $\bR$-tree having
$\bP^1$ as the ideal (or Gromov) boundary, and
the topology of $(\sH^1,\rho)$ is finer than the relative topology of $\sH^1$ in $\sP^1$. 
For every $\cS\in\sP^1$, the action on $\sP^1$ of a rational function $h\in K(z)$ 
on $\bP^1$ of degree $>0$ induces the tangent map
$h_*=(h_*)_{\cS}:T_{\cS}\sP^1\to T_{h(\cS)}\sP^1$ so that
for every $\overrightarrow{v}=\overrightarrow{\cS\cS'}\in T_{\cS}\sP^1$, 
if $\cS'\in\sP^1\setminus\{\cS\}$ is close enough to $\cS$, 
then for any distinct $\cS'',\cS'''\in[\cS,\cS']$, we have
$\rho(h(\cS''),h(\cS'''))/\rho(\cS,\cS')\equiv m_{\overrightarrow{v}}(h)$,
where the constant $m_{\overrightarrow{v}}(h)\in\{1,\ldots,\deg h\}$
is called the directional local degree of $h$ with respect to $\overrightarrow{v}$,
and $h_*(\overrightarrow{v})=\overrightarrow{h(\cS)h(\cS')}$; 
moreover, 
\begin{gather}
h(U_{\overrightarrow{v}})=\text{either }\sP^1\text{ or }U_{h_*(\overrightarrow{v})}.\label{eq:image} 
\end{gather}

The hyperbolic metric $\rho$ is independent of the choice of a projective coordinate on $\bP^1$;
indeed, $\rho$ is invariant under the restriction to $\sH^1$
of the (extended analytic) action on $\sP^1$
of the linear fractional transformations group $\PGL(2,K)$.
Let us call 
an open subset $A$ in $\sP^1$ a (non-degenerating and finite 
open Berkovich) annulus in 
$\sP^1$ if 
\begin{gather*}
 A=A(\cS,\cS'):=U_{\overrightarrow{\cS\cS'}}\cap U_{\overrightarrow{\cS'\cS}} 
\quad\text{for some distinct }\cS,\cS'\in\sH^1, 
\end{gather*}
and then the modulus of $A$ is defined as
\begin{gather*}
 \operatorname{mod}A:=\rho(\cS,\cS')\in\bR_{>0};
\end{gather*}
the moduli of annuli are $\PGL(2,K)$-invariant, and
for any annulus $A=A(\cS,\cS')$ in $\sP^1$ and
any $\cS''\in
[\cS,\cS']\setminus\{\cS,\cS'\}$, 
\begin{gather}
 \operatorname{mod}A
=\operatorname{mod}\bigl(A(\cS,\cS'')\bigr)+\operatorname{mod}\bigl(A(\cS',\cS'')\bigr).\label{eq:modulusdecomp}
\end{gather}
Similarly to concentric annuli, for a subset $E$ in $\sP^1$, 
we say an annulus $A=A(\cS,\cS')$ separates $E$ if $A\subset\sP^1\setminus E$,
$E\cap(\sP^1\setminus U_{\overrightarrow{\cS\cS'}})\neq\emptyset$, 
and $E\cap(\sP^1\setminus U_{\overrightarrow{\cS'\cS}})\neq\emptyset$,
and the condition \eqref{eq:boundedmoduli} for this $E$ is reformulated as
\begin{gather}
 \sup\bigl\{\operatorname{mod}A:A\text{ is an annulus in }\sP^1\text{ separating }E\bigr\}<\infty.\label{eq:upfree}
\end{gather}

\subsection{Chordal metric on $\bP^1$ and the Hsia kernels on $\sP^1$}\label{th:kernel}
Let $\pi:K^2\setminus\{(0,0)\}\to\bP^1$ 
be the canonical projection.
On $K^2$, let $\|(p_0,p_1)\|$ be the maximum norm $\max\{|p_0|,|p_1|\}$.
With the wedge product $(p_0,p_1)\wedge(q_0,q_1):=p_0q_1-p_1q_0$ on $K^2$,
the (normalized) chordal metric on $\bP^1$ is  defined as
$[z,w]:=|p\wedge q|/(\|p\|\cdot\|q\|)$
on $\bP^1\times\bP^1$, where $p\in\pi^{-1}(z),q\in\pi^{-1}(w)$, so 
that
\begin{gather*}
 [z,w]=\frac{|z-w|}{\max\{1,|z|\}\max\{1,|w|\}}\le\max\{1,|z-w|\}\quad\text{on }K\times K
\end{gather*}
and that the strong triangle inequality $[z,w]\le\max\{[z,w'],[w',w]\}$
for any $z,w,w'\in\bP^1$ still holds. The topology
of $(\bP^1,[z,w])$ coincides with the relative topology of $\bP^1$ in $\sP^1$.
Let us denote by $\diam_\#S$ 
the diameter of a (non-empty) subset $S$ in $(\bP^1,[z,w])$.


The action on $(\bP^1,[z,w])$ of a rational function $h\in K(z)$ 
of degree $>0$ is 
$1/|\Res(h)|$-Lipschitz continuous, where $\Res(h)\in[1,+\infty)$ 
is the homogeneous resultant $\Res(H_0,H_1)$ of a (non-degenerate
homogeneous) lift $H=(H_0,H_1)\in(K[X_0,X_1]_{\deg h})^2$ of $h(=[H_0:H_1])$ 
normalized so that the maximum of the absolute values of all coefficients of
$H_0,H_1$ equals $1$ (\cite[Theorem 0.1]{RumelyWinburn15}). 
In particular, the action on $(\bP^1,[z,w])$ of each element
$m(z)=(az+b)/(cz+d)$ in $\PGL(2,K)$, where $\max\{|a|,|b|,|c|,|d|\}=1$, is quasi-isometric in that
for any $z,w\in\bP^1$,
$|ad-bc|[z,w]\le[M(z),M(w)]\le |ad-bc|^{-1}[z,w]$,
so in particular the subgroup
$\PGL(2,\cO_K)$ in $\PGL(2,K)$ acts isometrically on $(\bP^1,[z,w])$.

The generalized Hsia kernel $[\cS,\cS']_g$ on $\sP^1$ with respect to $\cS_g$
is the unique upper semicontinuous and separately continuous extension to $\sP^1\times\sP^1$
of the chordal metric function $[z,w]$ on $\bP^1\times\bP^1$.
More generally, for each $\cS_0\in\sP^1$,
the generalized Hsia kernel on $\sP^1$ with respect to $\cS_0$ is defined as
\begin{gather*}
[\cS,\cS']_{\cS_0}:=[\cS,\cS']_g/([\cS,\cS_0]_g[\cS',\cS_0]_g)\quad\text{on }\sP^1\times\sP^1,
\end{gather*}
adopting the convention $1/0=0/(0^2)=+\infty$; indeed,
denoting by $\cS\wedge_{\cS_0}\cS'$
the unique point in $[\cS,\cS']\cap[\cS,\cS_0]\cap[\cS',\cS_0]$
for any $\cS,\cS',\cS_0\in\sP^1$ (so in particular
that $\cS\wedge_\infty\cS_0=\cS\wedge\cS'$),
the Gromov product on $(\sH^1,\rho)$ with respect to 
a point $\cS_0\in\sH^1$ is written as
\begin{gather}
\rho(\cS_0,\cS\wedge_{\cS_0}\cS')=-\log[\cS,\cS']_{\cS_0}+\rho(\cS_g,\cS_0)\quad\text{on }\sH^1\times\sH^1.\label{eq:Gromov} 
\end{gather}
The generalized Hsia kernel function 
$[\cS,\cS]_{\cS_0}$ not only satisfies the strong triangle inequality
\begin{gather*}
 [\cS,\cS']_{\cS_0}\le\max\bigl\{[\cS,\cS'']_{\cS_0},[\cS'',\cS']_{\cS_0}\bigr\}
\quad\text{for any }\cS,\cS'\in\sP^1
\end{gather*}
but also is $\PGL(2,\cO_K)$-invariant in that
for every $M\in\PGL(2,\cO_K)$, 
\begin{gather}
 [M(\cS),M(\cS')]_{M(\cS_0)}=[\cS,\cS']_{\cS_0}\quad\text{on }\sP^1\times\sP^1.\label{eq:Hsiainvariant}
\end{gather}
Let us denote $[\cS,\cS']_\infty$ also by $|\cS-\cS'|_\infty$, 
(the restriction to $\sA^1\times\sA^1$ of) which is the original Hsia kernel 
on $\sA^1$
and restricts to $|z-w|$ on $K\times K$, although $\cS-\cS'$ itself is not necessarily defined. For any $\cS,\cS'\in\sA^1$, 
\begin{multline}
 |\cS-\cS'|_\infty=
\bigl\{|z-z'|:z\in B_{\cS},z'\in B_{\cS'}\bigr\}=\diam(\cS\wedge\cS')\\
\ge\max\{\diam\cS,\diam(\cS')\}.\label{eq:Hsiadiam}
\end{multline}
For every (non-empty) subset $S$ in $\sA^1$, we also set 
\begin{gather*}
 \diam_\infty S:=\sup\bigl\{|\cS-\cS'|_\infty:\cS,\cS'\in S\bigr\}\in[0,+\infty], 
\end{gather*}
so in particular that for every $\cS\in\sA^1$, $\diam\cS=\diam_\infty\{\cS\}$.

For every $\cS_0\in\sH^1$ and every compact subset $E$ in $\sA^1$,
noting that
$\log[\,\cdot\,,\cS_0]_g=-\rho(\cS_g,\,\cdot\wedge_{\cS_g}\cS_0)\ge-\rho(\cS_g,\cS_0)=\log[\cS_0,\cS_0]_g$ on $\sH^1$ (by \eqref{eq:Gromov}), we have the uniform comparison
\begin{gather}
 \bigl([\cS_0,\cS_0]_g\bigr)^2[\cS,\cS']_{\cS_0}\le |\cS-\cS'|_\infty
\le\frac{[\cS,\cS']_{\cS_0}}{(\inf_E[\cdot,\infty]_g)^2}\quad\text{on }E\times E\label{eq:unifcompare}
\end{gather}
between the kernel functions $|\cS-\cS'|_\infty$ and $[\cS,\cS']_{\cS_0}$ restricted 
to $E\times E$. 

For every $\cS_0\in\sA^1$ and every $r\ge\diam(\cS_0)$, 
writing $B_{\cS_0}=B(z_0,\diam(\cS_0))$ for some $z_0\in K$, set
\begin{gather*}
 \sB(\cS_0,r):=\{\cS\in\sA^1:|\cS-\cS_0|_\infty\le r\}
=\sP^1\setminus U_{\overrightarrow{\cS_{B(z_0,r)}\infty}}\bigl(=\overline{B(z_0,r)}\quad\text{in }\sP^1\bigr).
\end{gather*}
Similarly, for every $a\in\bP^1$ and $r\in[0,1]$, set
\begin{gather*}
 \sB_\#(a,r):=\{\cS\in\sP^1:[\cS,a]_g\le r\}\bigl(=\overline{\{z\in\bP^1:[z,a]\le r\}}\quad\text{in }\sP^1\bigr),
\end{gather*}
which is called a Berkovich chordal closed ball in $\sP^1$.

\subsection{Classical Julia/Fatou sets and Hsia's equicontinuity criterion}\label{sec:classical}
For a rational function $f\in K(z)$ on $\bP^1$
of degree $>1$, the classical Fatou set of $f$
is defined by the set of all points in $\bP^1$ 
at each of which 
the iterations family $f^n:(\bP^1,[z,w])\to(\bP^1,[z,w])$, $n\in\bN$,
of $f$ is equicontinuous (then this subset is indeed open from
the proof of \cite[Theorem 23]{KS09}), 
and the classical Julia set is by the complement of the classical Fatou set of $f$ in $\bP^1$. 
Then in fact $\sJ(f)\cap\bP^1$ and $\sF(f)\cap\bP^1$ coincide
with the classical Julia and Fatou sets of $f$, respectively,
and $\sJ(f)\cap\bP^1$ still has no isolated points in $\bP^1$
(otherwise $\sJ(f)\cap\bP^1$ must be
a singleton in $\bP^1$, and in turn
$\sJ(f)\cap\bP^1\subset E(f)$, which contradicts $E(f)\subset\sF(f)$).

Hsia's non-archimedean counterpart \cite{Hsia00} to Montel's so called 
three points theorem asserts that, {\em if a family $\cF$ of 
meromorphic functions on a $K$-closed disk $B=B(a,r)$, $r>0$,
omits more than one point in $\bP^1$ in that $\#(\bP^1\setminus\bigcup_{f\in\cF}f(B))\ge 2$, then $\cF$ is 
equicontinuous on $B$
regarding each element of $\cF$ as a continuous 
mapping from $B$ to $(\bP^1,[z,w])$.} Let us call this theorem
Hsia's Montel-type {\em two points} theorem. For further developments
of the Montel-type equicontinuity/normality theorem in non-archimedean and Berkovich (i.e., non-classical) setting, 
see \cite{FKT12,Rodriguez16}.

\section{Proof of Theorem \ref{th:upJulia}}
\label{sec:upJulia}

Let $f\in K(z)$ be a rational function on $\bP^1$ of degree $d>1$
having no potentially good reductions (otherwise $\sJ(f)$ is a singleton,
so is not (uniformly) perfect).
The following argument is an adaption of that
in \cite{MR92}, which seems most adaptable to non-archimedean setting
among the (independent) originals \cite{MR92,Hinkkanen93,EremenkoUP}.

Suppose to the contrary that $\sJ(f)$ is not uniformly perfect, i.e., that
there is a sequence $(A_j)$ of (non-degenerating and finite open Berkovich)
annuli in $\sP^1$ separating $\sJ(f)$ such that 
$\lim_{j\to\infty}\operatorname{mod}(A_j)=+\infty$. 
Then without loss of generality, we assume that for every $j\in\bN$,
$A_j=A(\cS_j,\cS_j')$, where 
\begin{gather*}
 \cS_j=\cS_{B(a_j,|z_j|)}\quad\text{and}\quad\cS'_j=\cS_{B(a_j,|z'_j|)} 
\end{gather*}
for some $a_j\in\cO_K=B(0,1)$ and some $z_j,z'_j\in K^*$, $|z_j|<|z_j'|\le 1$
(so in particular $\lim_{j\to\infty}|z_j|=0$); 
indeed, (i) by \eqref{eq:modulusdecomp},
we assume that $\cS_g\not\in A_j$ for every $j\in\bN$, 
and in turn, (ii) by \eqref{eq:modulusdecomp},
the $\PGL(2,K)$-invariance of the moduli of annuli in $\sP^1$, 
and the $\PGL(2,K)$-equivariance of $\sJ(f)$, 
taking a subsequence of $(A_j)$ and 
replacing $f(z)$ with $1/f(1/z)$, if necessary,
we also assume that for every $j\in\bN$, 
$A_j=A(\cS_j,\cS_j')$ for some $\cS_j,\cS_j'\in\sH^1$
satisfying $\cS_j\prec\cS_j'\preceq\cS_g$, and finally, (iii) 
by the density of $\sH^1_{\mathrm{II}}$ in $\sP^1$ and
the continuity of $\diam$ on any interval in $\sP^1$, we assume that
for every $j\in\bN$, $\cS_j,\cS_j'\in\sH^1_{\mathrm{II}}$,
without loss of generality.

Pick sequences $(w_j),(w'_j)$ in $K^*$ such that for every $j\in\bN$,
\begin{gather*}
 \exp\bigl(-\operatorname{mod}(A_j)\bigr)=|z_j/z'_j|<|w_j|<|w'_j|< 1
\end{gather*}
and that $\lim_{j\to\infty}|w'_j|=\lim_{j\to\infty}|w_j|/|w'_j|=0$
(so $\lim_{j\to\infty}|w_j|=0$). 
For every $j\in\bN$, 
since $|z_j|<|z'_jw_j|<|z'_jw'_j|<|z'_j|$, we have
\begin{gather}
 \emptyset\neq\bigl(\sJ(f)\cap(\sP^1\setminus U_{\overrightarrow{\cS_j\cS'_j}})\subset\bigr)
\sJ(f)\cap U_{\overrightarrow{\cS_{B(a_j,|z'_jw_j|)}a_j}},
\label{eq:separate} 
\end{gather}
and then (see Benedetto \cite[Proposition 12.9]{BenedettoBook})
\begin{gather}
\sJ(f)\subset f^n(U_{\overrightarrow{\cS_{B(a_j,|z'_jw_j|)}a_j}})
\quad\text{for }n\gg 1.\label{eq:selfsimilar} 
\end{gather}

Let us also pick a subset $S\subset\bP^1$ such that $\#S\ge 3$ as follows;
since $\sup_{j\in\bN}|z'_j|\le 1$ and $\lim_{j\to\infty}|w_j|=0$, we also have
\begin{gather}
\diam_\infty(U_{\cS_{\overrightarrow{B(a_j,|z'_jw_j|)a_j}}})=
\diam(B(a_j,|z'_jw_j|))=|z'_j||w_j|\to 0\tag{\dag}
\label{eq:diameter}
\end{gather}
as $j\to\infty$. Hence if {\bfseries (a)} the sequence $(a_j)$ (in $\cO_K$) has 
a limit point in $\cO_K$, then by \eqref{eq:separate},
\eqref{eq:diameter}, and the strong triangle inequality, this limit point is
in $\sJ(f)\cap\bP^1$, and then since $\sJ(f)\cap\bP^1$ has no isolated points, 
we can pick $S\subset\sJ(f)\cap\bP^1$ such that $\#S=3$.
Alternatively if {\bfseries (b)} the sequence
$(a_j)$ (in $\cO_K$) has no limit points in $\cO_K$, 
then by \eqref{eq:diameter} and the strong triangle inequality
(and the completeness of $K$),
we can fix $1\ll j_1<j_2<j_3$ such that
$U_{\overrightarrow{\cS_{B(a_{j_1},|z'_{j_1}w_{j_1}|)}a_{j_1}}}\cap U_{\overrightarrow{\cS_{B(a_{j_2},|z'_{j_2}w_{j_2}|)}a_{j_2}}}=\emptyset$ and in turn that
$(U_{\overrightarrow{\cS_{B(a_{j_1},|z'_{j_1}w_{j_1}|)}a_{j_1}}}\cup U_{\overrightarrow{\cS_{B(a_{j_2},|z'_{j_2}w_{j_2}|)}a_{j_2}}})
\cap U_{\overrightarrow{\cS_{B(a_{j_3},|z'_{j_3}w_{j_3}|)}a_{j_3}}}=\emptyset$,
and set $S:=\{a_{j_\ell}:\ell\in\{1,2,3\}\}$.

Fix $L>1$ so large that $(0<)L^{-1}\cdot\min_{z,w\in S:\text{distinct}}[z,w]\le\diam_\#(\sJ(f)\cap\bP^1)$ and that 
the action on $(\bP^1,[z,w])$ of $f$ is $L$-Lipschitz continuous,
and fix $c\in(0,L^{-1}\cdot\min_{z,w\in S:\text{distinct}}[z,w])$. Then
by \eqref{eq:selfsimilar},
there exists a (unique) sequence $(m_j)$ in $\bN$ tending to $\infty$ 
as $j\to\infty$ such that for every $j\gg 1$, 
\begin{multline*}
 \max_{n\in\{0,1,\ldots,m_j-1\}}\diam_\#\bigl(f^n(U_{\overrightarrow{\cS_{B(a_j,|z'_jw_j|)}a_j}})\cap\bP^1\bigr)<c\\
\le\diam_\#\bigl(f^{m_j}(U_{\overrightarrow{\cS_{B(a_j,|z'_jw_j|)}a_j}})\cap\bP^1\bigr)\bigl(<Lc<\min_{z,w\in S:\text{distinct}}[z,w]\le 1\bigr), 
\end{multline*}
so in particular
\begin{gather}
 \#\bigl((f^{m_j}(U_{\overrightarrow{\cS_{B(a_j,|z'_jw_j|)}a_j}}))\cap S\bigr)\le 1.\tag{*}\label{eq:atmostone}
\end{gather}
Moreover, by the choice of $(w_j),(w'_j)$, for every $j\in\bN$, 
we have $U_{\overrightarrow{\cS_{B(a_j,|z_j'w'_j|)}a_j}}
\setminus U_{\overrightarrow{\cS_{B(a_j,|z_j'w_j|)}a_j}}\subset A_j$, 
which with $f(\sF(f))\subset\sF(f)$ also yields
\begin{gather}
 f^{m_j}\bigl(U_{\overrightarrow{\cS_{B(a_j,|z_j'w'_j|)}a_j}}
\setminus U_{\overrightarrow{\cS_{B(a_j,|z_j'w_j|)}a_j}}\bigr)\subset
f^{m_j}(A_j)\subset\sF(f).\tag{**}\label{eq:annulusFatou}
\end{gather} 

We claim that for $j\gg 1$, we still have
\begin{gather*}
 \#\bigl((f^{m_j}(U_{\overrightarrow{\cS_{B(a_j,|z'_jw'_j|)}a_j}}))\cap S\bigr)\le 1; 
\end{gather*}
for, in the case (a), the claim holds 
by \eqref{eq:atmostone} and \eqref{eq:annulusFatou}
since $S\subset\sJ(f)$ in this case. 
In the case (b), for $j\gg 1$, 
by \eqref{eq:image} and \eqref{eq:atmostone}, we have 
\begin{gather}
 f^{m_j}(U_{\overrightarrow{\cS_{B(a_j,|z'_jw_j|)}a_j}})
=U_{(f^{m_j})_*(\overrightarrow{\cS_{B(a_j,|z'_jw_j|)}a_j})}\tag{***}.\label{eq:imagedirect}
\end{gather}
Then 
$f^{m_j}(U_{\overrightarrow{\cS_{B(a_j,|z'_jw_j|)}a_j}})$
intersects $U_{\overrightarrow{\cS_{B(a_{j_\ell},|z'_{j_\ell}w_{j_\ell}|)}\cS_{j_\ell}}}$ for at most one of $\ell\in\{1,2,3\}$ (by the strong triangle inequality), 
and then by \eqref{eq:image}, \eqref{eq:separate}, and \eqref{eq:annulusFatou},
we even have 
\begin{gather}
 f^{m_j}(U_{\overrightarrow{\cS_{B(a_j,|z_j'w'_j|)}a_j}})
=U_{(f^{m_j})_*(\overrightarrow{\cS_{B(a_j,|z_j'w'_j|)}a_j})}.\tag{****}\label{eq:imagedirectlarge}
\end{gather}
Hence, if $\#\bigl\{\ell\in\{1,2,3\}:U_{(f^{m_j})_*(\overrightarrow{\cS_{B(a_j,|z_j'w'_j|)}a_j})}\cap U_{\overrightarrow{\cS_{B(a_{j_\ell},|z'_{j_\ell}w_{j_\ell}|)}a_{j_\ell}}}\neq\emptyset\bigr\}\le 1$, then we are done. Otherwise,
$U_{(f^{m_j})_*(\overrightarrow{\cS_{B(a_j,|z_j'w'_j|)}a_j})}\supset
U_{\overrightarrow{\cS_{B(a_{j_\ell},|z'_{j_\ell}w_{j_\ell}|)}a_{j_\ell}}}$
for more than one of $\ell\in\{1,2,3\}$
(by the strong triangle inequality), and then
\begin{multline*}
\emptyset=\sF(f)\cap\sJ(f)
\underset{\text{by }\eqref{eq:annulusFatou}}{\supset}\bigl(f^{m_j}(U_{\overrightarrow{\cS_{B(a_j,|z_j'w'_j|)}a_j}}
\setminus U_{\overrightarrow{\cS_{B(a_j,|z_j'w_j|)}a_j}})\bigr)\cap\sJ(f)\\
\underset{\text{by }\eqref{eq:imagedirect}\&\eqref{eq:imagedirectlarge}}{\supset}
\bigl(U_{(f^{m_j})_*(\overrightarrow{\cS_{B(a_j,|z_j'w'_j|)}a_j})}\setminus U_{(f^{m_j})_*(\overrightarrow{\cS_{B(a_j,|z'_jw_j|)}a_j})}\bigr)\cap\sJ(f)\\
\underset{\text{by }\eqref{eq:atmostone}\&\eqref{eq:separate}}{\supset} 
(U_{\overrightarrow{\cS_{B(a_{j_\ell},|z'_{j_\ell}w_{j_\ell}|)}a_{j_\ell}}})\cap\sJ(f)\neq\emptyset
\end{multline*}
for at least one of $\ell\in\{1,2,3\}$ (the final inclusion in which is also by the strong triangle inequality). This is impossible, and we are also done.

Once the claim is at our disposal, taking a subsequence if necessary, 
there is $S'\subset S$ such that 
$\#S'=2$ and that the family 
\begin{gather*}
 g_j(z):=f^{m_j}\bigl(a_j+z'_jw'_jz\bigr)\in K(z), \quad j\in\bN, 
\end{gather*}
restricted to $\cM_K=U_{\overrightarrow{\cS_g0}}\cap K$ 
omits $S'$. Then by Hsia's Montel-type two points theorem 
(see \S\ref{sec:classical}), 
this family $\{g_j:\cM_K\to(\bP^1,[z,w])\}_{j\in\bN}$ is equicontinuous 
at $z=0$. Consequently, recalling that
$\lim_{j\to\infty}|w_j/w'_j|=0$ 
(from the choice of $(w_j),(w'_j)$), we must have
\begin{multline*}
 0=\lim_{j\to\infty}\diam_\#\bigl(g_j(B(0,|w_j/w'_j|))\bigr)\\
=\liminf_{j\to\infty}\diam_\#\bigl(f^{m_j}(U_{\overrightarrow{\cS_{B(a_j,|z'_jw_j|)}a_j}})\cap\bP^1\bigr)\ge c>0,
\end{multline*}
which is a contradiction. Now the proof of Theorem \ref{th:upJulia} is complete. \qed

\section{A (non-infinitesimal) lower capacity density property}
\label{sec:lcd}

Let $K$ be an algebraically closed field that is complete
with respect to a non-trivial and non-archimedean absolute value $|\cdot|$.

The logarithmic capacity $\Capa_\infty E$ with pole $\infty$
of a compact subset $E$ in $\sA^1$ 
(see \eqref{eq:capacitygeneral} for the definition)
equals the transfinite diameter 
\begin{gather*}
d^{(\infty)}_\infty E:=\lim_{n\to\infty}\Bigl(\sup_{\cS_1,\ldots,\cS_n\in E}\Bigl(\prod_{i=1}^n\prod_{j\in\{1,\ldots,n\}\setminus\{i\}}\bigl|\cS_i-\cS_j\bigr|_\infty\Bigr)^{\frac{1}{n(n-1)}}\Bigr)\in\bR_{\ge 0}
\end{gather*}
of $E$ with pole $\infty$ (the above limit is a decreasing one); in particular,
$\Capa_\infty \emptyset=0$ and $\Capa_\infty E\le\Capa_\infty(E')$ 
if $E\subset E'$. 
For every $\cS\in\sA^1$ and every $r\ge\diam\cS$,
writing $B_{\cS}=B(z_0,\diam\cS)$, we have
\begin{gather*}
 \Capa_\infty(\sB(\cS,r))=\Capa_\infty(\{\cS_{B(z_0,r)}\})=r.
\end{gather*}
More details on potential theory on $\sP^1$ would be
given in Section \ref{sec:holder}.

\begin{definition}
We say a compact subset $E$ in $\sP^1$ 
has the (non-infinitesimal) lower capacity density property
(for some $c\in(0,1)$) if 
there is $c\in(0,1)$ such that for every $\cS\in E\setminus\{\infty\}$ 
and every $r\in(\diam\cS,\diam_\infty E)$, 
\begin{gather}
  \Capa_\infty\bigl(E\cap\sB(\cS,r)\bigr)\ge cr.\label{eq:lcd} 
\end{gather}
\end{definition}

The following potential theoretic precision/characterization
of compact subsets in $\sP^1$
having the bounded moduli property for separating annuli
(see \eqref{eq:boundedmoduli})
is a non-archimedean counterpart to the fundamental Pommerenke \cite[Theorem 1]{Pom79}
in complex analysis.

\begin{mainth}\label{th:lcd}
Let $K$ be an algebraically closed field that is complete
with respect to a non-trivial and non-archimedean absolute value, and
let $E$ be a compact subset in $\sP^1$. 
Then $E$ has the bounded moduli property for its separating annuli
if and only if $E$ has the $($non-infinitesimal$)$
lower capacity density property $($for some $c\in(0,1))$.
\end{mainth}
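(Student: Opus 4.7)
The plan is to exploit the tree structure of $\sP^1$, normalizing $\infty\in\sP^1\setminus E$ by the $\PGL(2,K)$-equivariance of both properties. For the implication ``lower capacity density $\Rightarrow$ bounded moduli,'' take a separating annulus $A=A(\cS,\cS')$ with $\cS\prec\cS'$ in $\sH^1$, and set $E_1:=E\cap(\sP^1\setminus U_{\overrightarrow{\cS\cS'}})$. This $E_1$ is non-empty and sits in the closed Berkovich disk of $\cS$; by the strong triangle inequality, the emptiness of $A\cap E$ propagates so that every sideways branch along the interval $(\cS,\cS')$ also misses $E$, whence $E\cap\sB(\cS_0,r)=E_1$ for any $\cS_0\in E_1$ and any $r\in[\diam\cS,\diam\cS')$. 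A direction analysis at $\cS'$ yields $\diam_\infty E\ge\diam\cS'$, so the lower capacity density hypothesis gives $\Capa_\infty(E_1)\ge cr$ for every such $r$; combining with the trivial bound $\Capa_\infty(E_1)\le\diam\cS$ (the capacity of the ambient closed Berkovich disk) yields $\operatorname{mod}A\le\log(1/c)$.

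For the converse, assume $\operatorname{mod}A\le M$ for every separating annulus. The cases $\#E\le 1$ are vacuous on both sides, and otherwise the hypothesis forces $E$ to be perfect, since an isolated $E$-point would spawn separating annuli of arbitrarily large modulus. Fix $\cS_0\in E\setminus\{\infty\}$ and $r\in(\diam\cS_0,\diam_\infty E)$; set $E_0:=E\cap\sB(\cS_0,r)$ and $\cS_{(1)}:=\bigwedge E_0$. Exactly the argument above shows that $A(\cS_{(1)},\cS_r)$ is a separating annulus for $E$ (where $\cS_r$ is the type II or III point of diameter $r$ above $\cS_0$), hence $\diam\cS_{(1)}\ge re^{-M}$. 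Iterating, at each join there are at least two ``down'' directions carrying $E_0$-points (by minimality), and perfectness of $E$ keeps each sub-collection infinite so the recursion does not terminate; this produces a binary tree of joins $\cS^\sigma$ indexed by finite words $\sigma\in\{1,2\}^*$ satisfying $\diam\cS^\sigma\ge\diam\cS_{(1)}\cdot e^{-|\sigma|M}$. By the strong triangle inequality, two $E_0$-points drawn from below distinct depth-$k$ sub-branches whose common ancestor is at depth $j-1$ are separated by at least $\diam\cS_{(1)}\cdot e^{-(j-1)M}$ in $|\cdot|_\infty$.

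Placing on $E_0$ the balanced binary probability measure $\mu$ determined by this tree (mass $2^{-k}$ on each depth-$k$ sub-branch) gives a non-atomic measure under which two independent samples first diverge at depth $j$ with probability $2^{-j}$. Stratifying the energy integral by divergence level yields
\begin{gather*}
I(\mu):=\iint-\log|\cS-\cS'|_\infty\,d\mu(\cS)\,d\mu(\cS')\le\sum_{j=1}^{\infty}2^{-j}\bigl(-\log\diam\cS_{(1)}+(j-1)M\bigr)=-\log\diam\cS_{(1)}+M,
\end{gather*}
using $\sum_{j\ge 1}(j-1)2^{-j}=1$. Hence $\Capa_\infty(E_0)\ge e^{-I(\mu)}\ge\diam\cS_{(1)}\cdot e^{-M}\ge re^{-2M}$, establishing the lower capacity density with $c=e^{-2M}$. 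The main technical obstacle will be the recursive verification at each level of the Cantor construction that the interval annulus between a top and its chosen child is disjoint from $E$ itself (not merely $E_0$), which requires systematically invoking the strong triangle inequality to rule out $E$-points in sideways branches along that interval.
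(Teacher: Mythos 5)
Your proof of the implication ``lower capacity density $\Rightarrow$ bounded moduli'' is essentially the paper's: monotonicity of $\Capa_\infty$, the identity $\Capa_\infty(\sB(\cS,r))=r$, and the observation that a separating annulus $A(\cS,\cS')$ freezes $E\cap\sB(\cS_0,r)$ as $r$ ranges over $(\diam\cS,\diam\cS')$. That direction is fine (though note that your opening normalization ``$\infty\in\sP^1\setminus E$'' is not free of charge: the lower capacity density property is formulated with the pole at $\infty$ and is only $\PGL(2,\cO_K)$-invariant, not $\PGL(2,K)$-equivariant; the paper's argument simply does not need any normalization here).

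The converse direction has a genuine gap at its very first step: the claim that the bounded moduli property forces $E$ to be perfect is false in the non-archimedean setting, as the paper explicitly warns in the introduction. An isolated point $\cS$ of $E$ lying in $\sH^1$ has $\diam\cS>0$, so the separating annuli it spawns bottom out at the scale $\diam\cS$ and do \emph{not} have arbitrarily large modulus; for instance $E=\{\cS_{B(0,1/2)},\cS_g\}$ satisfies the bounded moduli property but is not perfect, and a convergent chain of type II points is a non-perfect example where the capacity lower bound still has to be proved. Your construction --- an infinite binary tree of joins carrying a non-atomic balanced measure $\mu$, with the energy stratified by an almost surely finite ``divergence depth'' --- collapses for such sets: the recursion terminates, $\mu$ must have atoms, and pairs that never diverge carry positive $\mu\times\mu$-mass that your energy sum ignores. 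The paper's construction is designed precisely to survive this degeneration: the level-$j$ point $[\cS]_j$ is declared to be $\cS$ itself whenever $\diam\cS\ge e^{-c_E}s^jr$, the $2^j$ points at level $j$ are allowed to coincide, and the key separation estimate then falls back on $|\cS-\cS|_\infty=\diam\cS>0$ via \eqref{eq:Hsiadiam} (equivalently, the finite self-energy of an atom at an $\sH^1$-point), feeding into the transfinite diameter $d^{(\infty)}_\infty$ rather than the energy of a non-atomic measure. Incidentally, the obstacle you flag at the end --- disjointness from $E$ rather than $E_0$ of the intermediate annuli --- is the easy part, since those annuli sit inside $\sB(\cS_0,r)$; the real obstruction is the failure of perfectness, and repairing it leads you back to the paper's collapsing dyadic scheme.
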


Among several proofs of \cite[Theorem 1]{Pom79} 
(see the survey \cite{sugawa03}),
the beautiful argument in Pommerenke's original one seems 
most adaptable to non-archimedean setting.

\begin{proof}[Proof of Theorem $\ref{th:lcd}$]
Let $E$ be a compact subset in $\sP^1$, and set
\begin{gather*}
c_E:=\sup\bigl\{\operatorname{mod}A:A\text{ is a concentric annulus in }\sA^1
 \text{ separating }E\bigr\}.
\end{gather*}
\subsection*{[$\Leftarrow$]}
For any (non-degenerating and finite Berkovich open) concentric annulus 
$A(\cS,\cS')$ (for some $\cS,\cS'\in\sH^1$ satisfying $\cS\prec\cS'$) 
in $\sA^1$ separating $E$ and
any $\diam\cS<r<r'<\diam(\cS')$ (so $r'\le\diam_\infty E$), we have
\begin{gather*}
  \Capa_\infty(E\cap\sB(\cS,r'))=\Capa_\infty(E\cap\sB(\cS,r))\le\Capa_\infty(\sB(\cS,r))=r. 
\end{gather*}
Hence if $E$ has
the (non-infinitesimal) lower capacity density property (for some $c\in(0,1)$), 
then $cr'\le r$, i.e., $\log(r'/r)\le\log(1/c)$.
Making $r\searrow\diam\cS$ and $r'\nearrow\diam(\cS')$,
we have $\operatorname{mod}(A(\cS,\cS'))\le\log(1/c)$, so that
$c_E\le\log(1/c)<+\infty$. 

\subsection*{[$\Rightarrow$]} 
Suppose $c_E\in\bR_{\ge 0}$. Fix
$s\in(0,e^{-c_E})(\subset(0,1))$, and pick $\cS_0\in E$ 
and $r\in(\diam(\cS_0),\diam_\infty E)$.

To any $j\in\bN\cup\{0\}$ and any $\cS\in E\setminus\{\infty\}$,
we can associate a point $[\cS]_j=[\cS]_j\in E\setminus\{\infty\}$ 
satisfying
\begin{gather*}
\begin{cases}
[\cS]_j=\cS & \text{if }\diam\cS\ge e^{-c_E}s^jr,\\
[\cS]_j\in E\cap\bigl\{\cS'\in\sA^1:e^{-c_E}s^jr
\le|\cS'-\cS|_\infty\le s^jr\bigr\} & \text{if }\diam\cS< e^{-c_E}s^jr;
\end{cases}
\end{gather*}
in the latter case,
if $\cS\in\sH^1_{\mathrm{II}}\cup\sH^1_{\mathrm{III}}$, then
writing $\cS=B(a,\diam\cS)$ for some $a\in K$, $\{\cS'\in\sA^1:e^{-c_E}s^jr
\le |\cS'-\cS|_\infty\le s^jr\}=A(\cS_{B(a,e^{-c_E}s^jr)},\cS_{B(a,s^jr)})$.
The case that $\cS\in\sH^1_{\mathrm{IV}}$, i.e.,
that $\cS$ is the cofinal equivalence class of a non-increasing and nesting
sequence $(B(a_i,r_i))$ of $K$-closed disks having the empty intersection, 
is treated in a similar way.

Then inductively on $j\in\bN$, 
for every $j\in\bN$ and every $(i_1,\ldots,i_j)\in\{0,1\}^j$,
we set $a_{i_1,\ldots,i_j,i_{j+1}}\in E\setminus\{\infty\}$ 
so that
\begin{gather*}
a_{i_1}=\begin{cases}
	  \cS_0 &\text{if }i_1=0,\\
	  [\cS_0]_0 & \text{if }i_1=1
	 \end{cases}
\quad\text{and}\quad
a_{i_1,\ldots,i_j,i_{j+1}}=\begin{cases}
		      a_{i_1,\ldots,i_j} &\text{if }i_{j+1}=0,\\
		      [a_{i_1,\ldots,i_j}]_j &\text{if }i_{j+1}=1,
		     \end{cases}
\end{gather*}
and for every $j\in\bN$, set
\begin{gather*}
 E_j:=\bigl\{a_{i_1,\ldots,i_j}:(i_1,\ldots,i_j)\in\{0,1\}^j\bigr\},
\end{gather*}
so that $\cS_0\in E_j\subset E_{j+1}$. 
For every $j\in\bN$, using the strong triangle inequality
for the Hsia kernel $|\cS-\cS'|_\infty$ repeatedly, we have 
\begin{gather*}
 \sup_{(i_1,\ldots,i_j)\in\{0,1\}^j}|a_{i_1,\ldots,i_j}-\cS_0|_\infty
\le\max\{\diam(\cS_0),s^0r\}=r, 
\end{gather*}
so that 
$E_j\subset E\cap\sB(\cS_0,r)$.

For every $j\in\bN$ and any distinct $(i_1,\ldots,i_j),(i'_1,\ldots,i'_j)\in\{0,1\}^j$, writing 
\begin{gather*}
 \cS=a_{i_1,\ldots,i_j}\quad\text{and}\quad\cS'=a_{i'_1,\ldots,i'_j}
\end{gather*}
(in $E_j$) for simplicity, set
\begin{gather*}
m_*=m_*(\cS,\cS'):=\max\bigl\{m\in\{1,\ldots,j\}:i_m=i'_m\bigr\}\in\{0,1,\ldots,j-1\}
\end{gather*}
(under the convention $\max\emptyset=0$) and
\begin{gather*}
 \cS_*=\cS_*(\cS,\cS'):=
\begin{cases}
 a_{i_1,\ldots,i_{m_*}}=a_{i_1',\ldots,i_{m_*}'}
&\text{if }m_*>0,\\
 \cS_0 &\text{if }m_*=0
\end{cases}\in E_j. 
\end{gather*}

We claim that 
\begin{gather*}
 |\cS-\cS'|_\infty>s^{m_*+1}r;
\end{gather*}
indeed, replacing $(i_1,\ldots,i_j),(i'_1,\ldots,i'_j)$ if necessary,
we have $i_{m_*+1}=0$ and $i'_{m_*+1}=1$, so that 
\begin{gather*}
 a_{i_1,\ldots,i_{m_*},i_{m_*+1}}=\cS_*
\quad\text{and}\quad a_{i_1',\ldots,i_{m_*},i_{m_*+1}'}=[\cS_*]_{m_*}.
\end{gather*}
If $\max\{\diam(\cS_*),\diam([\cS_*]_{m_*})\}>s^{m_*+1}r$, then
either $\cS=\cS_*$ or $\cS'=[\cS_*]_{m_*}$, and in turn,
also using the lower estimate \eqref{eq:Hsiadiam}
of the kernel function $|\cS-\cS'|_\infty$, we have the desired estimate
\begin{gather*}
 |\cS-\cS'|_\infty\ge\max\bigl\{\diam(\cS_*),\diam([\cS_*]_{m_*})\bigr\}>s^{m_*+1}r
\end{gather*}
in this case. Alternatively 
if $\max\{\diam(\cS_*),\diam([\cS_*]_{m_*})\}\le s^{m_*+1}r$, then
\begin{gather*}
 \max_{m\in\{m_*+1,\cdots,j-1\}}
\max\bigl\{\diam(a_{i_1,\ldots,i_m}),\diam(a_{i'_1,\ldots,i'_m})\bigr\}
\le s^{m_*+1}r, 
\end{gather*}
so that
also using the strong triangle inequality for the kernel function $|\cS-\cS'|_\infty$
repeatedly,
we compute
\begin{multline*}
 |\cS-\cS_*|_\infty
\le\max_{m\in\{m_*+1,\ldots,j-1\}}\bigl|a_{i_1,\ldots,i_m,i_{m+1}}-a_{i_1,\ldots,i_m}\bigr|_\infty\\
\le\max_{m\in\{m_*+1,\ldots,j-1\}}\max\bigl\{\diam(a_{i_1,\ldots,i_m}),s^mr\bigr\}
\le s^{m_*+1}r
\end{multline*}
and, similarly,
\begin{gather*}
\bigl|\cS'-[\cS_*]_{m_*}\bigr|_\infty
\le\max_{m\in\{m_*+1,\ldots,j-1\}}\bigl|a_{i'_1,\ldots,i'_m,i'_{m+1}}-a_{i'_1,\ldots,i'_m}\bigr|_\infty\le s^{m_*+1}r.
\end{gather*}
By the above two estimates 
and the strong triangle inequality for the kernel function $|\cS-\cS'|_\infty$ again,
we have
\begin{multline*}
\max\bigl\{|\cS-\cS'|_\infty,s^{m_*+1}r\bigr\}\\
\ge\max\bigl\{|\cS-\cS_*|_\infty,|\cS_*-[\cS_*]_{m_*}|_\infty,|\cS'-[\cS_*]_{m_*}|_\infty\bigr\}\\
\ge \bigl|\cS_*-[\cS_*]_{m_*}\bigr|_\infty\ge e^{-c_E}s^{m_*}r>s^{m_*+1}r,
\end{multline*}
which also yields the desired estimate in this case. Hence the claim holds.

Once the claim is at our disposal, for 
every $j\in\bN$ and every $(i_1,\ldots,i_j)\in\{0,1\}^j$,
we compute
\begin{align*}
 &\prod_{(i'_1,\ldots,i'_j)\in\{0,1\}^j\setminus\{(i_1,\ldots,i_j)\}}|a_{i_1,\ldots,i_j}-a_{i'_1,\ldots,i'_j}|_\infty\\
=&\prod_{m=0}^{j-1}\Bigl(\prod_{(i'_1,\ldots,i'_j)\in\{0,1\}^j\setminus\{(i_1,\ldots,i_j)\}:m_*(a_{i_1,\ldots,i_j},a_{i'_1,\ldots,i'_j})=m}|a_{i_1,\ldots,i_j}-a_{i'_1,\ldots,i'_j}|_\infty\Bigr)\\
>&\prod_{m=0}^{j-1}(s^{m+1}r)^{2^{(j-m)-1}}
=s^{\sum_{m=0}^{j-1}(m+1)2^{(j-m)-1}}\cdot r^{2^j-1},
\end{align*}
so that, also recalling that $E_j\subset E\cap\sB(\cS_0,r)$ 
and $\#\{0,1\}^j=2^j$, we have
\begin{multline*}
 \Capa_\infty\bigl(E\cap\sB(\cS_0,r)\bigr)=d^{(\infty)}_\infty\bigl(E\cap\sB(\cS_0,r)\bigr)\\
\ge\limsup_{j\to\infty}
\Bigl(\bigl(s^{2^j\sum_{m=0}^{j-1}(m+1)2^{-m-1}}\cdot\, r^{2^j-1}\bigr)^{2^j}\Bigr)^{\frac{1}{2^j(2^j-1)}}\\
=\bigl(s^{\sum_{m=0}^\infty\frac{m+1}{2^{m+1}}}\bigr)r=s^2r.
\end{multline*}
Hence $E$ has the $($non-infinitesimal$)$
lower capacity density property (for $c=e^{-2c_E}$).
\end{proof}

\section{A uniform H\"older continuity property}
\label{sec:holder}

Let $K$ be an algebraically closed field that is complete
with respect to a non-trivial and non-archimedean absolute value $|\cdot|$.

Let us recall some details on logarithmic potential theory on $\sP^1$
(see \cite[\S 6]{BR10}). For every $\cS_0\in\sP^1$ and 
every positive Radon measure $\nu$ on $\sP^1$,
the logarithmic potential of $\nu$ on $\sP^1$ with 
respect to  $\cS_0$ (or with pole $\cS_0$ when $\cS_0\in\bP^1$) is the function
\begin{gather*}
 p_{\cS_0,\nu}(\cdot):=\int_{\sP^1}\log[\cdot,\cS']_{\cS_0}\nu(\cS'):\sP^1\to[-\infty,+\infty].
\end{gather*}
If either $\cS_0\in\sH^1$ or $\supp\nu\subset\sP^1\setminus\{\cS_0\}$, then 
$p_{\cS_0,\nu}$ is 
strongly upper semicontinuous on $\sP^1$ in that for every $\cS\in\sP^1$,
$\limsup_{\cS'\to\cS}p_{\cS_0,\nu}(\cS')=p_{\cS_0,\nu}(\cS)$, and satisfies
\begin{gather*}
p_{\cS_0,\nu}(\cS_0)=-\log[\cS_0,\cS_0]_g=\max_{\sP^1}p_{\cS_0,\nu}. 
\end{gather*}
If $\supp\nu\subset\sP^1\setminus\{\cS_0\}$, then 
$p_{\cS_0,\nu}$ is 
not only subharmonic on $\sP^1\setminus\{\cS_0\}$
but also harmonic on $\sP^1\setminus(\{\cS_0\}\cup\supp\nu)$,
and the logarithmic energy of $\nu$ with respect to $\cS_0$ 
(or with pole $\cS_0$ when $\cS_0\in\bP^1$) is defined as 
\begin{gather*}
 I_{\cS_0,\nu}:=\int_{\sP^1}p_{\cS_0,\nu}\nu=\int_{\sP^1\times\sP^1}
\log[\cS,\cS']_{\cS_0}(\nu\times\nu)(\cS,\cS')\in[-\infty,+\infty).
\end{gather*}
For every subset $C$ in $\sP^1$ and every $\cS_0\in\bP^1\setminus C$,
setting 
\begin{gather*}
 V_{\cS_0}(C):=\sup_{\nu}I_{\cS_0,\nu}\in[-\infty,+\infty),
\end{gather*}
where $\nu$ ranges over all probability Radon measures on $\sP^1$
supported by $C$ (under the convention $\sup_{\emptyset}=-\infty$ here),
the logarithmic capacity of $C$ with respect to $\cS_0$ 
(or with pole $\cS_0$ when $\cS_0\in\bP^1$) is 
\begin{gather}
 \Capa_{\cS_0}(C):=\exp\bigl(V_{\cS_0}(C)\bigr)\in\bR_{\ge 0}.\label{eq:capacitygeneral}
\end{gather}
For every compact subset $E$ in $\sP^1$
and every $\cS_0\in\bP^1\setminus E$, if $\Capa_{\cS_0}(E)>0$, then 
the following Frostman-type properties hold;
there is a unique probability Radon measure $\nu$ on $\sP^1$, 
which is called the equilibrium mass distribution on $E$
with respect to $\cS_0$ (or with pole $\cS_0$ when $\cS_0\in\bP^1\setminus E$)
and is denoted by $\nu_{\cS_0,E}$, 
such that $\supp\nu\subset E$ and that $I_{\cS_0,\nu}=V_{\cS_0}(E)$;
then denoting by $D_{\cS_0}=D_{\cS_0,E}$ the component of
$\sP^1\setminus E$ containing $\cS_0$, we have
$\supp(\nu_{\cS_0,E})\subset\partial(D_{\cS_0})$,
\begin{gather*}
p_{\cS_0,\nu_{\cS_0,E}}\ge V_{\cS_0}(E)\text{ on }\sP^1,\quad
p_{\cS_0,\nu_{\cS_0,E}}>V_{\cS_0}(E)\text{  on }D_{\cS_0},\quad\text{and}\\
p_{\cS_0,\nu_{\cS_0,E}}\equiv V_{\cS_0}(E)\text{ on }\sP^1\setminus(D_{\cS_0}\cup F),
\end{gather*}
and $p_{\cS_0,\nu_{\cS_0,E}}$ is continuous at every point in 
$\sP^1\setminus F$, 
where $F$ is an $F_{\sigma}$-subset in $(\partial(D_{\cS_0}))\cap\bP^1$ satisfying $\Capa_{\cS_0}(F)=0$;
$F$ could be chosen as $\emptyset$ if and only if
$p_{\cS_0,\nu_{\cS_0,E}}$ is continuous at any point in $\partial(D_{\cS_0})$,
and then $\supp(\nu_{\cS_0,E})=\partial(D_{\cS_0})$.
For the details on subharmonic 
functions on open subsets in $\sP^1$, see \cite[\S7, \S8]{BR10} and
\cite[\S 3]{ThuillierThesis}. 

By the $\PGL(2,\cO_K)$-invariance \eqref{eq:Hsiainvariant}
of the generalized Hsia kernel functions $[\cS,\cS']_{\cS_0}$ on $\sP^1$,
both the logarithmic capacity $\Capa_{\cS_0}(E)$ of a compact subset $E$ in $\sP^1$
and the equilibrium mass distribution on $E$ in $\sP^1$ 
with respect to $\cS_0\in\sP^1\setminus E$
are $\PGL(2,\cO_K)$-invariant/equivariant
in that for every $\cS_0\in\sP^1\setminus E$
and every $M\in\PGL(2,\cO_K)$, 
\begin{gather*}
 \Capa_{M(\cS_0)}(M(E))=\Capa_{\cS_0}E
\end{gather*}
and, if in addition $\Capa_{\cS_0}E>0$, then $\nu_{M(\cS_0),M(E)}=M_*(\nu_{\cS_0,E})$
on $\sP^1$.

In computation, the so called Perron method is useful
(see \cite[\S 3.1.3]{ThuillierThesis}).
For every $\cS_0\in\sH^1$, every $\overrightarrow{v}\in T_{\cS_0}\sP^1$, 
and every compact subset $E$ in $U_{\overrightarrow{v}}$, the 
zero-one extremal function 
\begin{multline*}
\cS\mapsto h(\cS;E,U_{\overrightarrow{v}})
:=\sup\bigl\{u(\cS):u\text{ is subharmonic on }U_{\overrightarrow{v}},\, u|E\le 0,\\
\text{and }u< 1\text{ on }U_{\overrightarrow{v}}\bigr\}:U_{\overrightarrow{v}}\to[-\infty,1)
\end{multline*} 
for $E$ with respect to $U_{\overrightarrow{v}}$
is not only harmonic on $U_{\overrightarrow{v}}\setminus E$
but also $\equiv 0$ on the union of $E$ and all components 
of $(U_{\overrightarrow{v}}\cup\{\cS_0\})\setminus E$ 
not containing $\cS_0$, and
the upper semicontinuous regularization $h(\cdot;E,U_{\overrightarrow{v}})^*$
of $h(\cdot;E,U_{\overrightarrow{v}})$ on $U_{\overrightarrow{v}}$ 
is subharmonic on $U_{\overrightarrow{v}}$ and
coincides with $h(\cdot;E,U_{\overrightarrow{v}})$
on $U_{\overrightarrow{v}}\setminus E$. 
On the other hand, similarly, for every compact subset $E$ in $\sP^1$
and every $\cS_0\in\sH^1\setminus E$, 
if $\Capa_{\cS_0}(E)>0$, then the function
\begin{multline*} 
\cS\mapsto P_{\cS_0,E}(\cS)
:=\sup\bigl\{u(\cS):u\text{ is subharmonic on }\sP^1\setminus\{\cS_0\},\, u|E\le 0,\\
\text{ and }u<-\log[\cS_0,\cS_0]_g-\log\Capa_{\cS_0}(E)\bigr\}\\
:\sP^1\to\bigl[-\infty,-\log[\cS_0,\cS_0]_g-\log\Capa_{\cS_0}(E)\bigr)
\end{multline*}
is superharmonic on the $D_{\cS_0}=D_{\cS_0,E}$, is harmonic on 
$D_{\cS_0}\setminus\{\cS_0\}$,
and is $\equiv 0$ on $\sP^1\setminus D_{\cS_0}$, 
and the upper semicontinuous regularization $P_{\cS_0,E}^*$ of $P_{\cS_0,E}$ 
on $\sP^1\setminus\{\cS_0\}$ is subharmonic on $\sP^1\setminus\{\cS_0\}$;
indeed,
\begin{gather}
P_{\cS_0,E}^*\equiv p_{\cS_0,\nu_{\cS_0,E}}-\log\Capa_{\cS_0}(E)\quad\text{on }\sP^1,\label{eq:perron}
\end{gather}
which is nothing but the generalized Green function $G_{\cS_0,E}$ on $\sP^1$ associated to $E$
with respect to $\cS_0$ (or with pole $\cS_0$ when $\cS_0\in\bP^1\setminus E$).

For every compact subset $E$ in $\sA^1$, 
every $\cS_0\in\sH^1\setminus E$, every $a\in E\cap K$, every $t>0$,
and every $R>1$ satisfying the following admissibility
\begin{gather}
 \Capa_{\cS_0}\bigl(E\cap\sB(a,t)\bigr)>0\quad\text{and}\quad
\cS_{B(a,Rt)}\in[\cS_0,a],
\label{eq:admissibility}
\end{gather}
we have not only
\begin{multline}
\frac{P_{\cS_0,E\cap\sB(a,t)}}{\sup_{U_{\overrightarrow{\cS_{B(a,Rt)}a}}}P_{\cS_0,E\cap\sB(a,t)}}\\\le 
h\bigl(\cdot;E\cap\sB(a,t),U_{\overrightarrow{\cS_{B(a,Rt)}a}}\bigr)\\
\le\frac{P_{\cS_0,E\cap\sB(a,t)}^*}{P_{\cS_0,E\cap\sB(a,t)}^*(\cS_{B(a,Rt)})}
\quad\text{on }U_{\overrightarrow{\cS_{B(a,Rt)}a}}
\label{eq:compare}
\end{multline} 
(cf.\ \cite[p.\ 209]{Siciak97})
using the maximum principle (\cite[Proposition 8.14]{BR10})
for subharmonic functions (on $U_{\overrightarrow{\cS_{B(a,Rt)}a}}$), but also
\begin{multline}
P_{\cS_0,E\cap\sB(a,t)}^*(\cS_{B(a,Rt)})
+\log\Capa_{\cS_0}\bigl(E\cap\sB(a,t)\bigr)\\
\Bigl(=p_{\cS_0,\nu_{\cS_0,E\cap\sB(a,t)}}(\cS_{B(a,Rt)})\Bigr)
=-\rho(\cS_0,\cS_{B(a,Rt)})+\rho(\cS_g,\cS_0)\label{eq:interm}
\end{multline}
using the identities \eqref{eq:perron} (applied to $E\cap\sB(a,t)$) and
$\log[\cS_{B(a,Rt)},\cdot\,]_{\cS_0}
\equiv-\rho(\cS_0,\cS_{B(a,Rt)})+\rho(\cS_g,\cS_0)$ 
on $\sB(a,t)$ (from \eqref{eq:Gromov}).
The latter \eqref{eq:interm} in particular yields
\begin{multline}
P_{\cS_0,E\cap\sB(a,t)}^*(\cS_{B(a,Rt)})\\
\Bigl(=-2\log\diam(\cS_0\wedge\cS_{B(a,Rt)})+\log\diam(\cS_0)+\log(Rt)\\
+2\log\diam(\cS_g\wedge\cS_0)-\log 1-\log\diam(\cS_0)-\log\Capa_{\cS_0}\bigl(E\cap\sB(a,t)\bigr)\Bigr)\\
\le 2\log\bigl(\diam(\cS_g\wedge\cS_0)/\diam(\cS_0)\bigr)
+\log\bigl(R/\bigl(\Capa_{\cS_0}(E\cap\sB(a,t))/t\bigr)\bigr).\label{eq:Greenlower}
\end{multline}

The following uniform H\"older continuity property 
of the generalized Green functions on $\sP^1$
associated to a non-empty compact subset 
in $\sP^1$ satisfying the (non-infinitesimal) lower capacity density condition
\eqref{eq:lcd}
is a non-archimedean counterpart to Lithner \cite{Lithner94} in complex analysis.
  
\begin{mainth}\label{th:holder}
Let $K$ be an algebraically closed field that is complete
with respect to a non-trivial and non-archimedean absolute value.
Then for every compact subset $E$ in 
$\sP^1$ having the $($non-infinitesimal$)$ 
lower capacity density property and every $\cS_0\in\sP^1\setminus E$,
denoting by $D_{\cS_0}=D_{\cS_0,E}$
the component of $\sP^1\setminus E$ containing $\cS_0$, 
there are $\alpha>0$ and $\delta_0>0$ such that 
\begin{gather*}
\sup_{a\in(\partial(D_{\cS_0}))\cap\bP^1}\sup_{\delta\in(0,\delta_0]}\frac{\sup_{\sB_\#(a,\delta)}\bigl(p_{\cS_0,\nu_{\cS_0,E}}(\cdot)-\log\Capa_{\cS_0}(E)\bigr)}{\delta^\alpha}<+\infty.
\end{gather*}
\end{mainth}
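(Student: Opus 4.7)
\emph{Plan.} The strategy is to adapt Lithner's iteration scheme. I first establish a uniform one-step decay factor inequality
\begin{gather*}
\sup_{\sB(a,t)} G_{\cS_0,E} \le \theta \cdot \sup_{\sB(a,Rt)} G_{\cS_0,E},
\end{gather*}
with $R > 1$ large and $\theta \in (0,1)$ both independent of $a$ and of small $t > 0$, and then iterate on geometric scales $t_n := \delta_0 R^{-n}$ to produce $\sup_{\sB(a,t)} G_{\cS_0,E} \le C t^\alpha$ with $\alpha := -\log\theta/\log R > 0$. The decay factor is produced from the Perron-method comparison \eqref{eq:compare} combined with the explicit Gromov-product computation underlying \eqref{eq:interm}--\eqref{eq:Greenlower}; the lower capacity density property supplied by Theorem \ref{th:lcd} controls the resulting capacity ratio.

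\emph{Reductions.} Writing the target as $G_{\cS_0,E} = P^*_{\cS_0,E}$ via \eqref{eq:perron}, I would first assume $\cS_0 \in \sH^1$ (otherwise replace $\cS_0$ by any $\cS_0' \in \sH^1 \cap D_{\cS_0}$; the difference $G_{\cS_0,E}-G_{\cS_0',E}$ is bounded harmonic on a neighborhood of $(\partial D_{\cS_0}) \cap \bP^1$, so H\"older control transfers). Composing with $z \mapsto 1/z$ (an element of $\PGL(2,\cO_K)$ acting as a chordal isometry) if necessary, together with the $\PGL(2,\cO_K)$-equivariance \eqref{eq:Hsiainvariant} of the generalized Hsia kernels, I may assume $a \in \cO_K$. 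For $a \in \cO_K$ and small $\delta$, the chordal ball $\sB_\#(a,\delta)$ is comparable to the Hsia ball $\sB(a,\delta)$ up to a bounded factor, so it suffices to bound $\sup_{\sB(a,t)} G_{\cS_0,E}$ by $Ct^\alpha$ uniformly in $a$. Finally, by \eqref{eq:unifcompare}, the LCD hypothesis transfers to the lower bound $\Capa_{\cS_0}(E \cap \sB(a,t)) \ge c' t$ for $a \in E \cap K$ and small $t$, with $c'$ independent of $a$.

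\emph{Decay factor inequality.} Fix $R > 1$ large, $a \in \cO_K \cap (\partial D_{\cS_0})$, and $t \le \delta_0$, where $\delta_0$ is chosen so that the admissibility \eqref{eq:admissibility} holds uniformly in $a$ (possible since $(\partial D_{\cS_0})\cap\bP^1$ is compact and at positive $\rho$-distance from $\cS_0 \in \sH^1$). Set $E_t := E \cap \sB(a,t)$ and $U := U_{\overrightarrow{\cS_{B(a,Rt)}a}}$, so that $\cS_0 \notin U$. The function $G_{\cS_0,E}/G_{\cS_0,E}(\cS_{B(a,Rt)})$ is subharmonic on $U$, is $<1$ on $U$ by the maximum principle, and vanishes on $E_t$ outside the $F_\sigma$-exceptional set of capacity zero; the extremal characterization of the zero-one function $h(\cdot;E_t,U)$ therefore yields
\begin{gather*}
G_{\cS_0,E}(\cS) \le h(\cS;E_t,U) \cdot G_{\cS_0,E}(\cS_{B(a,Rt)}) \quad \text{on } U.
\end{gather*}
On $\sB(a,t) \subset U$, the right inequality in \eqref{eq:compare} together with the maximum principle for the subharmonic function $P^*_{\cS_0,E_t}$ on $\sB(a,t)$ (whose $\sP^1$-boundary is the single point $\cS_{B(a,t)}$) gives
\begin{gather*}
\sup_{\sB(a,t)} h(\cdot;E_t,U) \le \frac{P^*_{\cS_0,E_t}(\cS_{B(a,t)})}{P^*_{\cS_0,E_t}(\cS_{B(a,Rt)})}.
\end{gather*}
Each value $P^*_{\cS_0,E_t}(\cS_{B(a,s)})$, $s \in \{t,Rt\}$, is computed from the Gromov identity \eqref{eq:Gromov} using $E_t \subset \sB(a,t) \subset \sB(a,s)$, producing $P^*_{\cS_0,E_t}(\cS_{B(a,t)}) = P^*_{\cS_0,E_t}(\cS_{B(a,Rt)}) - \log R$. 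Then \eqref{eq:Greenlower} together with the LCD lower bound $\Capa_{\cS_0}(E_t)/t \ge c'$ caps the denominator by a constant $C_0 + \log R$ independent of $a$ and $t$; the ratio is thus at most $\theta := C_0/(C_0+\log R) < 1$, establishing the one-step decay.

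\emph{Iteration and main obstacle.} Iterating along $t_n = \delta_0 R^{-n}$ gives $\sup_{\sB(a,t_n)} G_{\cS_0,E} \le \theta^n \cdot \sup_{\sP^1} G_{\cS_0,E}$, and interpolating between consecutive scales produces $\sup_{\sB(a,t)} G_{\cS_0,E} \le C t^\alpha$ on $(0,\delta_0]$ with $\alpha = -\log\theta/\log R$. Translating back to chordal balls via the Hsia/chordal comparison concludes the proof. The principal obstacle is securing uniformity of $\delta_0$, $\theta$, and $\alpha$ as $a$ varies over the compact set $(\partial D_{\cS_0})\cap\bP^1$: this rests on the uniformity of the admissibility scale (from the positive $\rho$-distance between $\cS_0$ and $\partial D_{\cS_0}$), of the LCD constant (inherited from the hypothesis via \eqref{eq:unifcompare}), and of the chordal/Hsia comparison (handled by the $\PGL(2,\cO_K)$ normalization bringing $a$ into $\cO_K$).
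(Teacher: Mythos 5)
Your proposal follows essentially the same route as the paper: the one\mbox{-}step decay factor you extract is exactly the quantity $c(t)$ of \eqref{eq:Choquetlower}, obtained from the Perron comparison \eqref{eq:compare} together with the Gromov-product computation \eqref{eq:interm}--\eqref{eq:Greenlower} and the capacity lower bound $\Capa_{\cS_0}(E\cap\sB(a,t))\ge c't$ transferred through \eqref{eq:unifcompare}. The only structural difference is bookkeeping: you iterate the inequality directly on the Green function (Lithner's arrangement), whereas the paper iterates on the zero-one extremal functions $h(\cdot;E\cap\sB(a,t_n),U_{\overrightarrow{\cS_{B(a,Rt_n)}a}})$ at a fixed outer scale $t_n$ (Siciak's arrangement, \eqref{eq:Siciak}) and only transfers to $p_{\cS_0,\nu_{\cS_0,E}}-\log\Capa_{\cS_0}(E)$ at the end. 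Both yield the same exponent up to constants, and your uniformity discussion (admissibility scale, LCD constant, chordal/Hsia comparison after the $\PGL(2,\cO_K)$ normalization) matches the paper's.

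Two steps need repair. First, $G_{\cS_0,E}=P^*_{\cS_0,E}$ is only known to vanish on $E\cap\sB(a,t)$ off an $F_\sigma$ set of capacity zero, so $G_{\cS_0,E}/G_{\cS_0,E}(\cS_{B(a,Rt)})$ is not literally an admissible competitor in the definition of $h(\cdot;E_t,U)$, which requires $u|E_t\le 0$ pointwise; you acknowledge the exceptional set but then invoke the extremal characterization anyway. The fix is the one built into \eqref{eq:compare}: run the comparison with each competitor $u$ of the unregularized envelope $P_{\cS_0,E}$ (these do satisfy $u|E\le 0$, hence $u|E_t\le 0$, and $u<G_{\cS_0,E}(\cS_{B(a,Rt)})$ on $U$ by the maximum principle), take the supremum, and pass to upper semicontinuous regularizations only at the end. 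Second, your reduction from $\cS_0\in\bP^1$ to $\cS_0\in\sH^1$ via ``the difference $G_{\cS_0,E}-G_{\cS_0',E}$ is bounded harmonic'' does not by itself transfer H\"older decay at the boundary (a bounded difference of two functions tending to $0$ says nothing about rates); you would need a two-sided comparison $G_{\cS_0,E}\asymp G_{\cS_0',E}$ near $\partial D_{\cS_0}$. The paper avoids this entirely by choosing $\cS_0'\in\sH^1$ so close to $\infty$ that $[\cS,\cS']_{\cS_0'}=[\cS,\cS']_\infty$ identically on $E\times E$, which makes the two equilibrium problems coincide. With these two patches your argument is correct.
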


\begin{proof}
Pick a compact subset $E$ in $\sP^1$, 
which is for a while not assumed to have
the (non-infinitesimal) lower capacity density property, 
and pick a point $\cS_0\in\sP^1\setminus E$. 
Then by 
the above $\PGL(2,\cO_K)$-invariance/equivariance of
the kernel function $[\cS,\cS']_g$,
the (positive) logarithmic capacity, 
and the equilibrium mass distribution 
of a compact subset in $\sP^1$, we assume both
\begin{gather*}
 E\subset\sA^1
\end{gather*}
and $\cS_0\in D_\infty$, and moreover $\cS_0=\infty$ when $\cS_0\in\bP^1$, 
without loss of generality. We even assume 
\begin{gather*}
 \cS_0\in D_\infty\cap\sH^1 
\end{gather*}
without loss of generality; for, by the strong triangle inequality for 
the kernel function $[\cS,\cS']_g$,
if $\cS_0\in\sH^1$ is close enough to $\infty$, then
$[\cS,\cS']_{\cS_0}=[\cS,\cS']_\infty$ on $E\times E$. 
In the following, we adapt Siciak's Wiener-type argument in \cite{Siciak97}
to non-archimedean setting. 

\subsection*{(i)} For any $t>0$, any $R>r>1$, and any $a\in E\cap K$
satisfying the admissibility \eqref{eq:admissibility},
by the latter inequality in \eqref{eq:compare}, 
the maximum principle for subharmonic functions 
(on $U_{\overrightarrow{\cS_{B(a,rt)}a}}$), 
\eqref{eq:interm} twice, and \eqref{eq:Greenlower},
we compute
 \begin{multline}
c(t):=1-\sup_{U_{\overrightarrow{\cS_{B(a,rt)}a}}}h\bigl(\cdot;E\cap\sB(a,t),U_{\overrightarrow{\cS_{B(a,Rt)}a}}\bigr)\\
\biggl(\ge 1-\frac{P_{\cS_0,E\cap\sB(a,t)}^*(\cS_{B(a,rt)})}{P_{\cS_0,E\cap\sB(a,t)}^*(\cS_{B(a,Rt)})}
=\frac{P_{\cS_0,E\cap\sB(a,t)}^*(\cS_{B(a,Rt)})-P_{\cS_0,E\cap\sB(a,t)}^*(\cS_{B(a,rt)})}{P_{\cS_0,E\cap\sB(a,t)}^*(\cS_{B(a,Rt)})}
\\
=\frac{-\rho(\cS_0,\cS_{B(a,Rt)})+\rho(\cS_0,\cS_{B(a,rt)})}{P_{\cS_0,E\cap\sB(a,t)}^*(\cS_{B(a,Rt)})}
=\frac{\rho(\cS_{B(a,Rt)},\cS_{B(a,rt)})}{P_{\cS_0,E\cap\sB(a,t)}^*(\cS_{B(a,Rt)})}\biggr)
\\
\ge\frac{\log(R/r)}{2\log\bigl(\diam(\cS_g\wedge\cS_0)/\diam(\cS_0)\bigr)+\log\bigl(R/\bigl(\Capa_{\cS_0}(E\cap\sB(a,t))/t\bigr)\bigr)}.\label{eq:Choquetlower}
\end{multline}

\begin{remark}
The quantity $c(t)$ above is nothing but (a non-archimedean version of)
the Choquet capacity 
$C(E\cap\sB(a,t);U_{\overrightarrow{\cS_{B(a,Rt)}a}},\sB(a,rt))$
of $E\cap\sB(a,t)$
with respect to $\partial(\sB(a,rt))=\{\cS_{B(a,rt)}\}$
(see, e.g., \cite[Introduction]{Siciak97}).
\end{remark}

\subsection*{(ii)} Fix $R>r>1$ (so $r/R\in(0,1)$), and for every $n\in\bN$,
set
\begin{gather*}
 t_n:=\Bigl(\frac{r}{R}\Bigr)^n.
\end{gather*}
Then for every $a\in E\cap K$, 
every $n\in\bN$, and every $m\in\bN\cup\{0\}$, by an induction on $m$,
we have
\begin{gather}
h\bigl(\cdot;\sB(a,t_n)\cap E,U_{\overrightarrow{\cS_{B(a,Rt_n)}a}}\bigr)
\le e^{-\sum_{j=0}^m c(t_{n+j})}
\quad\text{on }U_{\overrightarrow{\cS_{B(a,rt_{n+m})}a}};\label{eq:Siciak}
\end{gather} 
indeed, for any subharmonic function
$u$ on $U_{\overrightarrow{\cS_{B(a,Rt_n)}a}}$ satisfying
$u|(\sB(a,t_n)\cap E)\le 0$ and $u<1$ on $U_{\overrightarrow{\cS_{B(a,Rt_n)}a}}$,
we have $u=1-(1-u)\le 1-c(t_n)\le e^{-c(t_n)}$ on 
$U_{\overrightarrow{\cS_{B(a,rt_n)}a}}=U_{\overrightarrow{\cS_{B(a,Rt_{n+1})}a}}$,
which yields not only
\begin{gather*}
 h(\cdot;\sB(a,t_n)\cap E,U_{\overrightarrow{\cS_{B(a,Rt_n)}a}})\le e^{-c(t_n)}
\quad\text{on }U_{\overrightarrow{\cS_{B(a,rt_n)}a}}=U_{\overrightarrow{\cS_{B(a,Rt_{n+1})}a}} 
\end{gather*}
but, together with $t_{n+1}\le t_n$, also
\begin{multline*}
\frac{h\bigl(\cdot;\sB(a,t_n)\cap E,U_{\overrightarrow{\cS_{B(a,Rt_n)}a}}\bigr)}{e^{-c(t_n)}}
\Bigl(\le h\bigl(\cdot;\sB(a,t_n)\cap E,U_{\overrightarrow{\cS_{B(a,Rt_{n+1})}a}}\bigr)\Bigr)\\
\le h\bigl(\cdot;\sB(a,t_{n+1})\cap E,U_{\overrightarrow{\cS_{B(a,Rt_{n+1})}a}}\bigr)\quad\text{on }U_{\overrightarrow{\cS_{B(a,Rt_{n+1})}a}}.
\end{multline*}
Now an induction concludes the desired estimate (see \cite[p.\ 212]{Siciak97}).

\subsection*{(iii)} 
Also fix $n\in\bN$ so large that $Rt_n<\diam(\cS_0)(\le\diam(\cS_0\wedge a)$
for every $a\in E\cap K$); then
for every $a\in E\cap K$ and every $m\in\bN\cup\{0\}$, the latter half 
\begin{gather*}
 \cS_{B(a,Rt_{n+m})}\in[\cS_0,a]
\end{gather*}
in the admissibility \eqref{eq:admissibility} for $t=t_{n+m}$ holds.
 
To any $\delta\in(0,\min\{1,rt_{n+1}\}]$, let us associate $m_\delta\in\bN$ 
such that
\begin{gather*}
 \delta\in[rt_{n+m_\delta+1},rt_{n+m_\delta}],
\end{gather*}
the lower bound of $\delta$ in which is equivalent to the inequality
\begin{gather*}
-(m_\delta+1)\log(R/r)\le-\log\bigl((r/R)^nr\bigr)+\log\delta.
\end{gather*}
\subsection*{(iv)} Suppose now that the compact subset $E$ (in $\sA^1$ 
under the normalization in {\bfseries (i)})
has the $($non-infinitesimal$)$ lower capacity density property,
for some $c\in(0,1)$.
Then using the latter half of the uniform comparison \eqref{eq:unifcompare}
between the kernel functions $|\cS-\cS'|_\infty$ and $[\cS,\cS']_{\cS_0}$ 
on $E$, we have
\begin{multline*}
 c_0:=\inf_{\cS\in E}\inf_{t\in(\diam\cS,\diam_\infty E)}\frac{\Capa_{\cS_0}(E\cap\sB(\cS,t))}{t}\\
\Bigl(\ge\inf_{\cS\in E}\inf_{t\in(\diam\cS,\diam_\infty E)}\frac{\bigl(\inf_E[\cdot,\infty]_g\bigr)^2\Capa_{\infty}(E\cap\sB(\cS,t))}{t}\Bigr)\\
\ge c\cdot\bigl(\inf_E[\cdot,\infty]_g\bigr)^2
\in(0,1), 
\end{multline*}
and set
\begin{multline*}
 \ell=\ell_E:=\frac{1}{2\log\bigl(\diam(\cS_g\wedge\cS_0))/\diam(\cS_0)\bigr)+\log\bigl(R/c_0)}\in\Bigl(0,\frac{1}{\log(R/c_0)}\Bigr).
\end{multline*}
Then for every $a\in E\cap K$, 
using \eqref{eq:Siciak} and \eqref{eq:Choquetlower}, we have
\begin{multline}
h\bigl(\cdot;E\cap\sB(a,t_n),U_{\overrightarrow{\cS_{B(a,t_nR)}a}}\bigr)
\le\exp\Bigl(-\sum_{j=0}^{m_\delta}\ell\log(R/r)\Bigr)\\
=e^{-\ell(m_\delta+1)\log(R/r)}
 \le \biggl(\frac{1}{(R/r)^nr}\biggr)^{\ell}\cdot\delta^{\ell}
\quad\text{on }\sB(a,\delta)(\subset U_{\overrightarrow{\cS_{B(a,rt_{n+m_{\delta}})}a}}).\label{eq:conti}
\end{multline}

\subsection*{(v)} Let us complete the proof of Theorem \ref{th:holder}.
By \eqref{eq:conti} and the former half in \eqref{eq:compare},
for any $a\in E\cap K$, $P_{\cS_0,E\cap\sB(a,t_n)}$ is continuous at $a$,
and then so is $P_{\cS_0,E}$ since $P_{\cS_0,E}\le P_{\cS_0,E\cap\sB(a,t_n)}$ on $\sP^1$. Then recalling the definition of $\ell=\ell_E$,
by \eqref{eq:Greenlower},
the maximum principle for subharmonic functions
(applied to $P_{\cS_0,E\cap\sB(a,t_n)}^*$,
which is $\ge P_{\cS_0,E\cap\sB(a,t_n)}$,
on $U_{\overrightarrow{\cS_{B(a,t_nR)}a}}$), and
the former half in \eqref{eq:compare} again, we have
\begin{multline*}
 \ell\cdot\bigl(p_{\cS_0,\nu_{\cS_0,E}}-\log\Capa_{\cS_0}(E)\bigr)
\Bigl(\le\frac{P_{\cS_0,E}}{P_{\cS_0,E\cap\sB(a,t_n)}^*(\cS_{B(a,Rt_n)})}\Bigr)\\
\le h\bigl(\cdot;E\cap\sB(a,t_n),U_{\overrightarrow{\cS_{B(a,t_nR)}a}}\bigr)
\quad\text{on }\sB(a,\delta)(\subset U_{\overrightarrow{\cS_{B(a,Rt_n)}a}}). 
\end{multline*}
This with \eqref{eq:conti} completes the proof, recalling 
the latter half of the 
uniform comparison \eqref{eq:unifcompare} between the kernel functions 
$|\cS-\cS'|_\infty$ and $[\cS,\cS']_g$ on $E$ again.
\end{proof}

\begin{acknowledgement}
The author was partially supported by JSPS Grant-in-Aid 
for Scientific Research (C), 19K03541 and (B), 19H01798.
\end{acknowledgement}

\def\cprime{$'$}

\end{document}